\theoremstyle{plain}
\newtheorem{thm}{\protect\theoremname}
  \theoremstyle{remark}
  \newtheorem{rem}[thm]{\protect\remarkname}
  \theoremstyle{plain}
  \newtheorem{lem}[thm]{\protect\lemmaname}
\setlist{leftmargin=*}
  \providecommand{\lemmaname}{Lemma}
  \providecommand{\remarkname}{Remark}
\providecommand{\theoremname}{Theorem}
\begin{document}
\global\long\def\ii{\mathrm{i}}

\global\long\def\C{\mathbb{C}}

\global\long\def\R{\mathbb{R}}

\global\long\def\N{\mathbb{N}}

\global\long\def\phi{\varphi}

\global\long\def\epsilon{\varepsilon}

\global\long\def\div{\mathrm{div}}

\global\long\def\curl{\mathrm{curl}}

\global\long\def\supp{\mathrm{supp}}

\global\long\def\Hcurl{H(\curl,\Omega)}
\global\long\def\H0curl{H_0(\curl,\Omega)}

\global\long\def\Hdiv{H(\div,\Omega)}

\global\long\def\Calpha{C^{0,\alpha}(\overline{\Omega};\C^{3})}

\global\long\def\bo{\partial\Omega}

\subjclass[2010]{35Q61, 35J57, 35B65, 35Q60}

\title[Optimal H\"older regularity for Maxwell's equations]{H\"older regularity for Maxwell's equations under minimal assumptions
on the coefficients}

\author{Giovanni S. Alberti}

\address{Department of Mathematics, University of Genoa, Via Dodecaneso 35, 16146, Italy.}
\email{alberti@dima.unige.it}

\begin{abstract}
We prove global H\"older regularity for the solutions to the time-harmonic
anisotropic Maxwell's equations, under the assumptions of H\"older continuous
coefficients. The regularity hypotheses on the coefficients are minimal.
The same estimates hold also in the case of bianisotropic material
parameters.
\end{abstract}

\keywords{Maxwell's equations, H\"older regularity, optimal regularity, Schauder
estimates, anisotropic media, bianisotropic media, Helmholtz decomposition.}

\date{November 22, 2017}

\maketitle

\section{\label{sec:Introduction}Introduction}

This paper focuses on the H\"older regularity of the solutions $E,H\in\Hcurl:=\{F\in L^{2}(\Omega;\C^{3}):\curl F\in L^{2}(\Omega;\C^{3})\}$
to the time-harmonic Maxwell's equations \cite{MONK-2003}
\begin{equation}
\left\{ \begin{array}{ll}
\curl H=\ii\omega\varepsilon E+J_{e}\quad &\mbox{in }\Omega,\\
\curl E=-\ii\omega\mu H+J_{m}\quad &\mbox{in }\Omega,\\
E\times\nu=G\times\nu\quad &\mbox{on }\partial\Omega,
\end{array}\right.\label{eq:maxwell}
\end{equation}
where $\Omega\subseteq\mathbb{R}^{3}$ is a bounded domain of class $C^{1,1}$  and the coefficients $\epsilon$ and $\mu$
belong to $L^{\infty}\left(\Omega;\mathbb{C}^{3\times3}\right)$ and
are such that for every $\eta\in\mathbb{C}^{3}$ 
\begin{equation}
\Lambda^{-1}\left|\eta\right|^{2}\le\overline{\eta}\cdot\left(\varepsilon+\overline{\varepsilon}^{T}\right)\eta,\;\Lambda^{-1}\left|\eta\right|^{2}\le\overline{\eta}\cdot\left(\mu+\overline{\mu}^{T}\right)\eta\mbox{ and }\left|\mu\right|+\left|\varepsilon\right|\le\Lambda\mbox{ a.e. in }\ensuremath{\Omega}\label{eq:Hyp-Ellip}
\end{equation}
for some $\Lambda>0$. The $3\times3$ matrix $\varepsilon$ represents
the electric permittivity and $\mu$ the magnetic permeability. The
current sources $J_{e}$ and $J_{m}$ are in $L^{2}\left(\Omega;\mathbb{C}^{3}\right)$,
the boundary value $G$ belongs to $\Hcurl$ and the frequency $\omega$
is in $\C\setminus\{0\}$. We are interested in finding (minimal)
conditions on the parameters and on the sources such that the electric
field $E$ and/or the magnetic field $H$ are H\"older continuous. The
study of the minimal regularity of $\bo$ needed goes beyond the scopes
of this work; domains with rougher boundaries are considered in \cite{AMROUCHE-BERNARDI-DAUGE-GIRAULT-1998,COSTABEL-DAUGE-2000,2001-buffa-ciarlet,2001-buffa-ciarlet-II,BUFFA-COSTABEL-SHEEN-2002,2003-buffa-costabel-dauge}.

Let us mention the main known results concerning this problem. The
H\"older continuity of the solutions under the assumption of Lipschitz
coefficients was proven in \cite{YIN-2004}. The needed regularity
of the coefficients was reduced from $W^{1,\infty}$ to $W^{1,3+\delta}$
for some $\delta>0$ in \cite{AC2014}. The case of bianisotropic
materials was treated in \cite{FERNANDES-2012,AC2014}, with similar
hypotheses and results. For related recent papers, see \cite{2014-yin-wei,2016-tsering-xiao-wei,2015-prokhorov,2016-sini}.
The arguments of all these works are based on the $H^{1}$ regularity
of the electromagnetic fields, which was first obtained in \cite{WEBER-1981}
for Lipschitz coefficients, and then in \cite{AC2014} for $W^{1,3+\delta}$
coefficients. Thus, the coefficients were always required to belong
to some Sobolev space. 

The purpose of this work is to show that it is sufficient to assume
that the coefficients are H\"older continuous. Due to the terms $\epsilon E$
and $\mu H$ in \eqref{eq:maxwell}, this is the most natural hypothesis
on $\epsilon$ and/or $\mu$, and turns out to be minimal (see Remark~\ref{rem:minimal}
below). Our approach is very different from that of \cite{AC2014},
and is based on the Helmholtz decomposition of the electromagnetic
fields, as in \cite{WEBER-1981,YIN-2004} and several related works.
However, the argument used is new, and allows to avoid any additional
differentiability of $E$ and $H$. As far as the differentiability
of the fields is concerned, it is worth mentioning that ideas similar
to those used in this work may be applied to prove the $H^{1}$ regularity
of the fields with $W^{1,3}$ coefficients \cite{alberti-capdeboscq-2016}.

Before stating the main results of this work, we need to define the weak solutions of the Maxwell system. We say that  $(E,H)\in\Hcurl^{2}$ is a
weak solution of \eqref{eq:maxwell} if
\begin{subequations}
\begin{align}
&\int_\Omega H\cdot \curl \Phi_1 \,dx = \int_\Omega\left( \ii\omega\epsilon E+J_e \right)\cdot\Phi_1\,dx,\label{eq:weak1}
\\
&\int_\Omega E\cdot\curl\Phi_2 \,dx= \int_\Omega G\cdot\curl\Phi_2-\left(\curl G+\ii\omega\mu H-J_m\right)\cdot\Phi_2\,dx,\label{eq:weak2}
\end{align}
\end{subequations}
for every $(\Phi_1,\Phi_2)\in\H0curl\times\Hcurl$,
where $\H0curl=\{F\in\Hcurl:F\times\nu=0\text{ on $\bo$}\}$. 
These identities are formally equivalent to \eqref{eq:maxwell} thanks to an integration by parts \cite[Theorem~3.29]{MONK-2003}.  
Note that for $F\in\Hcurl$, the tangential trace $F\times \nu$ on $\bo$ belongs to $H^{-\frac12}(\bo;\C^3)$ and has to be interpreted in the weak sense.

The main result of this paper regarding the joint regularity of $E$
and $H$, under the assumptions that both $\epsilon$ and $\mu$ are
H\"older continuous, reads as follows.
\begin{thm}
\label{thm:joint-holder}Assume that \eqref{eq:Hyp-Ellip} holds true
and that 
\begin{align}
 & \epsilon\in C^{0,\alpha}(\overline{\Omega};\C^{3\times3}),\qquad\left\Vert \epsilon\right\Vert _{C^{0,\alpha}(\overline{\Omega};\C^{3\times3})}\le\Lambda,\label{eq:epsi-holder}\\
 & \mu\in C^{0,\alpha}(\overline{\Omega};\C^{3\times3}),\qquad\left\Vert \mu\right\Vert _{C^{0,\alpha}(\overline{\Omega};\C^{3\times3})}\le\Lambda,\label{eq:mu-holder}
\end{align}
for some $\alpha\in(0,\frac{1}{2}]$. Take $J_{e},J_{m}\in C^{0,\alpha}(\overline{\Omega};\C^{3})$
and $G\in C^{1,\alpha}(\curl,\Omega)$, where 
\begin{align*}
 & C^{N+1,\alpha}(\curl,\Omega)=\{F\in C^{N,\alpha}(\overline{\Omega};\C^{3}):\curl F\in C^{N,\alpha}(\overline{\Omega};\C^{3})\},\qquad N\in\N,
\end{align*}
equipped with the canonical norms. Let $(E,H)\in\Hcurl^{2}$ be a
weak solution of \eqref{eq:maxwell}. Then $E,H\in\Calpha$ and 
\begin{multline*}
\left\Vert (E,H)\right\Vert _{\Calpha^{2}}\\
\le C\bigl(\left\Vert (E,H)\right\Vert _{L^{2}(\Omega;\C^{3})^{2}}+\left\Vert G\right\Vert _{C^{1,\alpha}(\curl,\Omega)}+\left\Vert (J_{e},J_{m})\right\Vert _{C^{0,\alpha}(\overline{\Omega};\C^{3})^{2}}\bigr)
\end{multline*}
 for some constant $C$ depending only on $\Omega$, $\Lambda$ and
$\omega$.
\end{thm}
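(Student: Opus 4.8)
The plan is to read off from the Maxwell system the curl and the \emph{weighted} divergence of each field and to reconstruct the field from this data by a Helmholtz-type splitting in which the gradient part solves a divergence-form elliptic equation with Hölder coefficients. I first reduce to a homogeneous tangential trace: set $\hat E:=E-G$, so $\hat E\in H_0(\curl,\Omega)$ and $\curl\hat E=-\ii\omega\mu H+J_m-\curl G$. Testing the first equation $\curl H=\ii\omega\epsilon E+J_e$ against gradients $\nabla\psi$, $\psi\in H^1_0(\Omega)$, kills the left-hand side and yields, in the weak sense, $\div(\epsilon\hat E)=\div F_\phi$ with $F_\phi:=-\frac1{\ii\omega}J_e-\epsilon G\in\Calpha$; note this only ever involves $J_e$ itself, never a distributional derivative of it. Let $\phi\in H^1_0(\Omega)$ solve $\div(\epsilon\nabla\phi)=\div F_\phi$ (Lax--Milgram applies since $\mathrm{Re}(\overline\eta\cdot\epsilon\eta)\ge\Lambda^{-1}|\eta|^2$). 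Because $\epsilon,F_\phi\in\Calpha$ and $\Omega$ is $C^{1,1}$, the global Schauder estimate for divergence-form equations gives $\nabla\phi\in\Calpha$. Crucially $\phi$ depends only on the data and is already as regular as claimed, so all the difficulty sits in the remainder $z:=\hat E-\nabla\phi$, which satisfies $\div(\epsilon z)=0$, $z\times\nu=0$ on $\bo$, and $\curl z=\curl\hat E$.

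The core of the argument is the following regularity statement, which I would isolate as a lemma: if $\div(\epsilon z)=0$, $z\times\nu=0$, and $\curl z\in L^2$ (respectively $\curl z\in L^6$), then $z\in L^6$ (respectively $z\in\Calpha$). I prove it by a second, coefficient-free, decomposition. Since $\div\curl z=0$ and $\Omega$ is simply connected with connected boundary, the vector-potential theory on $C^{1,1}$ domains produces $A$ with $\curl A=\curl z$, $\div A=0$, $A\times\nu=0$ and the gain-of-one-derivative bound $\|A\|_{W^{1,q}}\le C\|\curl z\|_{L^q}$ ($q=2$ and $q=6$ respectively); thus $A\in H^1\hookrightarrow L^6$, respectively $A\in W^{1,6}\hookrightarrow C^{0,1/2}$. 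As $\curl(z-A)=0$ on a simply connected domain and $(z-A)\times\nu=0$, one has $z-A=\nabla\theta$ with $\theta\in H^1_0(\Omega)$, and $\theta$ solves the Dirichlet problem $\div(\epsilon\nabla\theta)=-\div(\epsilon A)$. Feeding $A$ into the scalar theory --- Calderón--Zygmund $L^6$ estimates for continuous coefficients in the first case, Schauder in the second (here $\epsilon A\in C^{0,\alpha}$ precisely because $A\in C^{0,1/2}$ and $\alpha\le\frac12$) --- gives $\nabla\theta\in L^6$, respectively $\nabla\theta\in\Calpha$, whence $z=A+\nabla\theta$ has the asserted regularity.

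With these ingredients the proof closes by a two-step bootstrap on the pair $(E,H)$, which is the only place the two equations interact. Starting from $E,H\in L^2$, one has $\curl\hat E\in L^2$ (using $\mu\in L^\infty$ and $J_m,\curl G\in\Calpha$, the latter being exactly the content of $G\in C^{1,\alpha}(\curl,\Omega)$), so the lemma gives $z\in L^6$ and hence $E=\hat E+G=z+\nabla\phi+G\in L^6$; the symmetric computation with $(\mu,J_m)$ and $\curl H=\ii\omega\epsilon E+J_e$ gives $H\in L^6$. A second pass now has $\curl\hat E\in L^6$, so the lemma upgrades $z$, and therefore $E$, to $\Calpha$, and likewise $H\in\Calpha$. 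Tracking the (linear) constants through Lax--Milgram, the Schauder and Calderón--Zygmund estimates, the vector-potential bound and the Sobolev embeddings --- all depending only on $\Omega$, $\Lambda$, $\omega$ --- delivers the stated inequality, the initial $\|(E,H)\|_{L^2}$ entering through the first rung of the bootstrap and the data norms through $\nabla\phi$, $\curl\hat E$ and the additive $G$.

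The genuinely delicate point is the restriction $\alpha\le\frac12$: it is forced by the single rung $W^{1,6}\hookrightarrow C^{0,1/2}$, which is exactly what lets one leap from integrability to Hölder continuity without differentiating the fields, and it caps the method at $C^{0,1/2}$. Two technical matters I expect to cost the most work, and would state as separate lemmas, are (i) that the divergence-form Schauder and $L^p$ theories apply verbatim to the complex, merely Hermitian-positive coefficient $\epsilon$ --- reduce the complex scalar equation to a strongly elliptic $2\times2$ real system, using $\mathrm{Re}(\overline\eta\cdot\epsilon\eta)\ge\Lambda^{-1}|\eta|^2$ --- and (ii) that the vector-potential estimate holds up to the boundary on a $C^{1,1}$ domain with the tangential condition $A\times\nu=0$; the hypotheses that $\Omega$ be simply connected with connected boundary are used precisely here, to solve the vector-potential problem with no compatibility conditions.
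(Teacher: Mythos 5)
Your treatment of $E$ is, in substance, the paper's own argument in lightly reorganized form: your vector potential $A$ is exactly the paper's $\Psi_{E}=\curl\Phi_{E}$ (same defining system $\curl A=\curl(E-G)$, $\div A=0$, $A\times\nu=0$, same Amrouche--Seloula div--curl estimate, i.e.\ Lemma~\ref{lem:friedrichs}), your $\phi+\theta$ is the paper's scalar potential $q_{E}$ solving \eqref{eq:PDE-qE}, and your two-pass bootstrap $L^{2}\to L^{6}\to\Calpha$, with the cap $\alpha\le\frac{1}{2}$ coming from $W^{1,6}\hookrightarrow C^{0,\frac{1}{2}}$, is precisely Steps 1--2 of the paper's proof. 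Your side remarks (i) and (ii) also match the paper's citations (Schauder/$L^{p}$ theory for strongly elliptic systems, vector potentials on $C^{1,1}$ domains with simply connected $\Omega$ and connected $\bo$).

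The genuine gap is the sentence ``the symmetric computation with $(\mu,J_{m})$ and $\curl H=\ii\omega\epsilon E+J_{e}$ gives $H\in L^{6}$.'' There is no symmetric computation: the boundary condition in \eqref{eq:maxwell} constrains only $E\times\nu$, so for $H$ there is no analogue of $\hat{E}\in H_{0}(\curl,\Omega)$, and your key lemma --- which needs $z\times\nu=0$ on $\bo$ both to build the potential with $A\times\nu=0$ and to conclude $z-A=\nabla\theta$ with $\theta\in H_{0}^{1}(\Omega)$ --- is simply not applicable to $H$; and without \emph{some} boundary condition on $H$ no up-to-the-boundary estimate is possible. The paper handles $H$ with normal rather than tangential conditions throughout: the vector potential satisfies $\Psi_{H}\cdot\nu=\div_{\bo}(\Phi_{H}\times\nu)=0$, and the scalar potential solves the \emph{Neumann} problem \eqref{eq:PDE-qH}, whose boundary datum must first be derived from the system and from the condition on $E$ via $\mu H\cdot\nu=\ii\omega^{-1}\curl E\cdot\nu-\ii\omega^{-1}J_{m}\cdot\nu$ and the surface-divergence identity $\curl E\cdot\nu=\div_{\bo}(E\times\nu)=\div_{\bo}(G\times\nu)=\curl G\cdot\nu$ on $\bo$. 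To repair your proof you would need (a) this derivation of the normal flux of $\mu H$, and (b) a second version of your lemma with $(\mu z)\cdot\nu=0$ on $\bo$, a vector potential normalized by $A\cdot\nu=0$, and Neumann problems (with their compatibility conditions) in place of Dirichlet ones. This is not a cosmetic omission: your second pass for $E$ needs $\curl\hat{E}=-\ii\omega\mu H+J_{m}-\curl G\in L^{6}$, hence $H\in L^{6}$, so the missing $H$ argument blocks the H\"older conclusion even for $E$ alone.
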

The higher regularity version is given below in Theorem~\ref{thm:joint-higher}.
This result can be easily extended to treat the case of bianisotropic
materials, see Theorem~\ref{thm:bianisotropic} below.

If only one of the parameters is $C^{0,\alpha}$, for instance $\epsilon$,
the corresponding field $E$ will be H\"older continuous, provided that
$\mu$ is real. (The Campanato spaces $L^{2,\lambda}$ are defined
in Section~\ref{sec:H=0000F6lder-regularity-of}.)
\begin{thm}
\label{thm:E-holder}Assume that \eqref{eq:Hyp-Ellip} and \eqref{eq:epsi-holder}
hold true and that $\Im\mu\equiv0$. Take $J_{e},G\in C^{0,\alpha}(\overline{\Omega};\C^{3})$
with $\curl G\in L^{2,\lambda}(\Omega;\C^{3})$ for some $\lambda>1$
and $J_{m}\in L^{2,\lambda}(\Omega;\C^{3})$. Let $(E,H)\in\Hcurl^{2}$
be a weak solution of \eqref{eq:maxwell}. Then $E\in C^{0,\beta}(\overline{\Omega};\C^{3})$,
where $\beta=\min(\frac{\tilde{\lambda}-1}{2},\frac{\lambda-1}{2},\alpha)$
for some $\tilde{\lambda}\in(1,2)$ depending only on $\Omega$ and
$\Lambda$, and 
\begin{multline*}
\left\Vert E\right\Vert _{C^{0,\beta}(\overline{\Omega};\C^{3})}\\
\le C\bigl(\left\Vert E\right\Vert _{L^{2}(\Omega;\C^{3})}+\left\Vert (G,J_{e})\right\Vert _{C^{0,\alpha}(\overline{\Omega};\C^{3})^{2}}+\left\Vert (\curl G,J_{m})\right\Vert _{L^{2,\lambda}(\Omega;\C^{3})^{2}}\bigr)
\end{multline*}
 for some constant $C$ depending only on $\Omega$, $\Lambda$ and
$\omega$.
\end{thm}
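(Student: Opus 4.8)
The plan is to recover the regularity of $E$ from a Helmholtz-type splitting, estimating the gradient part by Schauder theory and the solenoidal part by a Morrey--Campanato estimate for its curl, all without ever differentiating $E$ or $H$. Testing the first equation in \eqref{eq:maxwell} against gradients gives $\int_\Omega\varepsilon E\cdot\nabla\zeta=\tfrac{\ii}{\omega}\int_\Omega J_e\cdot\nabla\zeta$ for all $\zeta\in H^1_0(\Omega)$, i.e. $\div(\varepsilon E)=\tfrac{\ii}{\omega}\div J_e$ weakly, while the second equation reads $\curl E=-\ii\omega\mu H+J_m$; the roles of the two equations give likewise $\curl H=\ii\omega\varepsilon E+J_e$ and $\div(\mu H)=\tfrac{1}{\ii\omega}\div J_m$. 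After subtracting $G$ (whose contributions $G\in C^{0,\alpha}$ and $\curl G\in L^{2,\lambda}$ are of exactly the two types handled below) one may assume that the tangential trace of $E$ is homogeneous, so that the boundary condition is absorbed into the sources.

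First I would treat the gradient part. Write $E=\nabla p+w$, where $p\in H^1(\Omega)$ solves the scalar divergence-form problem $\div(\varepsilon\nabla p)=\div(\varepsilon E)$ with the boundary condition matching the splitting, so that $w$ is $\varepsilon$-solenoidal. Since the right-hand side is the divergence of the $C^{0,\alpha}$ field $\tfrac{\ii}{\omega}J_e$ and $\varepsilon\in C^{0,\alpha}(\overline{\Omega};\C^{3\times3})$ is (complex but) coercive by \eqref{eq:Hyp-Ellip}, the global Schauder/Campanato estimate up to the $C^{1,1}$ boundary yields $\nabla p\in C^{0,\alpha}(\overline{\Omega};\C^{3})$; this is the source of the exponent $\alpha$ and uses only the H\"older continuity of $\varepsilon$, not its reality. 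It then remains to estimate the solenoidal field $w$, whose curl equals $\curl E=-\ii\omega\mu H+J_m$; reconstructing $w$ from its curl gains one derivative, so that $\curl w\in L^{2,s}(\Omega;\C^3)$ with $s>1$ will give $w\in C^{0,(s-1)/2}$ by the Morrey/Campanato embedding. As $J_m\in L^{2,\lambda}$ with $\lambda>1$, the only real issue is to place $\mu H$ in $L^{2,s}$ with $s>1$.

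The crux is this Morrey bound on $\mu H$, and here the hypothesis $\Im\mu\equiv0$ is decisive. Because $\mu$ is merely $L^\infty$, Schauder is unavailable, so I would instead split $H=\nabla\phi+w_H$. The solenoidal part is controlled by $\curl w_H=\curl H=\ii\omega\varepsilon E+J_e\in L^2(\Omega;\C^3)$ (this is where $\|E\|_{L^2}$ enters), whence $w_H\in L^{2,2}(\Omega;\C^3)$. The scalar potential solves $\div(\mu\nabla\phi)=\tfrac{1}{\ii\omega}\div J_m-\div(\mu w_H)=\div\tilde F$ with $\tilde F\in L^{2,\min(\lambda,2)}$, a divergence-form equation with a \emph{real}, bounded, uniformly elliptic coefficient. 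By the De Giorgi--Nash--Moser theorem---valid precisely because $\mu$ is real---$\phi\in C^{0,\sigma}(\overline{\Omega})$ for some $\sigma=\sigma(\Omega,\Lambda)\in(0,\tfrac12)$, and a Caccioppoli inequality upgrades this oscillation bound to $\int_{B_r}|\nabla\phi|^2\lesssim r^{\,\min(1+2\sigma,\lambda)}$, i.e. $\nabla\phi\in L^{2,\tilde\lambda}$ with $\tilde\lambda:=1+2\sigma\in(1,2)$. Consequently $\mu H=\mu\nabla\phi+\mu w_H\in L^{2,\min(\tilde\lambda,\lambda)}$, with index strictly larger than $1$.

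Combining the two parts, $\curl E=-\ii\omega\mu H+J_m\in L^{2,\min(\tilde\lambda,\lambda)}$ with $\min(\tilde\lambda,\lambda)>1$, so the Morrey/Campanato embedding gives $w\in C^{0,\min(\frac{\tilde\lambda-1}{2},\frac{\lambda-1}{2})}$; together with $\nabla p\in C^{0,\alpha}$ this yields $E=\nabla p+w\in C^{0,\beta}$ with $\beta=\min(\frac{\tilde\lambda-1}{2},\frac{\lambda-1}{2},\alpha)$, and tracking the constants through each step produces the stated estimate. I expect the main obstacle to be the step turning the De Giorgi--Nash H\"older continuity of the potential $\phi$ into the sharp Morrey bound $\nabla\phi\in L^{2,\tilde\lambda}$ with $\tilde\lambda>1$---this is exactly what forces $\Im\mu\equiv0$ and pins down $\tilde\lambda\in(1,2)$---followed by the need to make the Helmholtz splitting and all the elliptic estimates hold globally, up to the $C^{1,1}$ boundary, with the correct boundary conditions on $p$, $\phi$ and the vector potentials.
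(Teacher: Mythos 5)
Your treatment of $\mu H$ is correct and coincides with the paper's: you split $H$ by the (unweighted) Helmholtz decomposition, control the solenoidal part in $L^{2,2}$ via the div-curl estimate and Sobolev/Campanato embeddings, and obtain $\nabla\phi\in L^{2,\tilde\lambda}$, $\tilde\lambda\in(1,2)$, from the real-coefficient Neumann problem --- the paper simply cites this last ingredient as Lemma~\ref{lem:campanato_regularity}, whereas you re-sketch its proof via De Giorgi--Nash--Moser plus Caccioppoli. The genuine gap is in your treatment of $E$. You split $E=\nabla p+w$ with $\div(\varepsilon w)=0$, so that the equation for $p$ has the ready-made H\"older source $\ii\omega^{-1}J_{e}$, and then claim that $\curl w\in L^{2,s}$ with $s>1$ gives $w\in C^{0,(s-1)/2}$ ``by the Morrey/Campanato embedding''. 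As stated this is unjustified, and it is exactly where the difficulty of the theorem sits: the curl alone never controls a vector field, since $w+\nabla u$ has the same curl and the same tangential trace for every $u\in H_{0}^{1}(\Omega;\C)$. The embedding you invoke needs the full gradient $\nabla w$ in a Campanato space, and passing from $\curl w$ to $\nabla w$ requires in addition control of $\div w$ and of a boundary trace (Lemma~\ref{lem:friedrichs-campanato}). Your $w$ satisfies only $\div(\varepsilon w)=0$, and since $\varepsilon$ is merely H\"older continuous (not differentiable) this gives no information whatsoever on $\div w$; a ``weighted'' div-curl estimate with a $C^{0,\alpha}$ weight is not a standard lemma --- it is essentially the statement being proven, so invoking it would be circular.

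The paper's decomposition is designed precisely to avoid this. It writes $E-G=\nabla q_{E}+\curl\Phi_{E}$ with $\div\Phi_{E}=0$: then $\Psi_{E}=\curl\Phi_{E}$ is automatically divergence free, and $\Psi_{E}\times\nu=0$ on $\bo$ because $q_{E}\in H_{0}^{1}(\Omega;\C)$ and $E\times\nu=G\times\nu$, so Lemma~\ref{lem:friedrichs-campanato} applies to $\Psi_{E}$ and yields $\Psi_{E}\in C^{0,(\min(\tilde\lambda,\lambda)-1)/2}(\overline{\Omega};\C^{3})$. The price is that the scalar equation \eqref{eq:PDE-qE} for $q_{E}$ then has source $\div(\epsilon G+\epsilon\curl\Phi_{E}-\ii\omega^{-1}J_{e})$, so Schauder must be applied \emph{after} the estimate on $\curl\Phi_{E}$, not before; note also that this caps the H\"older exponent of the solenoidal contribution at $\alpha$ as well, so your intermediate claim $w\in C^{0,\min(\frac{\tilde\lambda-1}{2},\frac{\lambda-1}{2})}$ without the $\alpha$ would not survive the repair (though the final $\beta$ is unaffected). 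If you try to fix your argument by further decomposing $w$ as $\nabla r+\curl\Psi$ and estimating $r$ by Schauder with source $\epsilon\curl\Psi$, you will find that you have reproduced the paper's proof and that the initial peeling off of $\nabla p$ was redundant.
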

The corresponding result for the H\"older regularity of $H$, assuming $\epsilon$ real and \eqref{eq:mu-holder}, is completely analogous; the details are omitted.

It is worth mentioning that the regularity assumptions
on the coefficients  given in the above theorems are indeed minimal.
\begin{rem}
\label{rem:minimal}$Let$ $\Omega=B(0,1)$ be the unit ball and take
$\alpha\in(0,1)$. Let $f\in L^{\infty}((-1,1);\R)\setminus C^{\alpha}((-1,1);\R)$
such that $\Lambda^{-1}\le f\le\Lambda$ in $(-1,1)$. Let $\epsilon$
be defined by $\epsilon(x)=f(x_{1})$. Choosing $J_{e}=(-\ii\omega,0,0)\in C^{0,\alpha}(\overline{\Omega};\C^{3})$,
observe that $E(x)=(f(x_{1})^{-1},0,0)$ and $H\equiv0$ are weak
solutions in $\Hcurl^{2}$ to
\[
\curl H=\ii\omega\varepsilon E+J_{e}\quad\mbox{in }\Omega,\qquad\curl E=-\ii\omega H\quad\mbox{in }\Omega,
\]
such that $E\notin C^{0,\alpha}(\Omega;\C^{3})$. This shows that
interior H\"older regularity cannot hold if $\epsilon$ is not H\"older
continuous, even in the simplified case where $\epsilon$ depends
only on one variable.
\end{rem}

The proofs of these results are based on the use of the scalar and vector potentials of the fields $E$ and $H$ obtained with the Helmholtz decomposition. We show that the study of the original system substantially reduces to the study of two elliptic problems for the scalar potentials, which may be treated with classical elliptic methods. This simple, and yet very powerful, idea  allows to treat the Maxwell's system as if it were elliptic. The technique developed in this paper
has then proven useful for a variety of problems, e.g.\ to state a version for Maxwell's equations of Meyers's higher integrability theorem  \cite{alberti-capdeboscq-2016}, to give asymptotic expansions of the solutions in presence of defects in the material parameters \cite{alberti-capdeboscq-2016} and for the spectral analysis of the Maxwell operator in unbounded domains \cite{alberti-etal-2017}, and has great potential for the study of other aspects of Maxwell's equations.

This paper is structured as follows. In Section~\ref{sec:Joint-H=0000F6lder-regularity}
we prove Theorem~\ref{thm:joint-holder} and discuss the corresponding
higher regularity result. Section~\ref{sec:Bi-anisotropic} is devoted
to the study of bianisotropic materials. Finally, in Section~\ref{sec:H=0000F6lder-regularity-of}
we prove Theorem~\ref{thm:E-holder}, by using standard elliptic
estimates in Campanato spaces.

\section{\label{sec:Joint-H=0000F6lder-regularity}Joint H\"older regularity
of $E$ and $H$}

Since regularity properties are local, without loss of generality in the rest of the paper we  assume that $\Omega$ is connected and
simply connected and that its boundary
$\bo$ is connected.

\subsection{Preliminary results}

We start by recalling the Helmholtz decomposition of a vector field. 
\begin{lem}[{\cite[Theorem 6.1]{AMROUCHE-SELOULA-2013}, \cite[Section 3.5]{AMROUCHE-BERNARDI-DAUGE-GIRAULT-1998}}]
\label{lem:helmholtz}Take $F\in L^{2}(\Omega;\C^{3})$.
\begin{enumerate}
\item There exist $q\in H_{0}^{1}(\Omega;\C)$ and $\Phi\in H^{1}(\Omega;\C^{3})$
such that
\[
F=\nabla q+\curl\Phi\quad\text{in }\Omega,
\]
$\div\Phi=0$ in $\Omega$ and $\Phi\cdot\nu=0$ on $\bo$.
\item There exist $q\in H^{1}(\Omega;\C)$ and $\Phi\in H^{1}(\Omega;\C^{3})$
such that
\[
F=\nabla q+\curl\Phi\quad\text{in }\Omega,
\]
$\div\Phi=0$ in $\Omega$ and $\Phi\times\nu=0$ on $\bo$.
\end{enumerate}
In both cases, there exists $C>0$ depending only on $\Omega$ such
that
\[
\left\Vert \Phi\right\Vert _{H^{1}(\Omega;\C^{3})}\le C\left\Vert F\right\Vert _{L^{2}(\Omega;\C^{3})}.
\]

\end{lem}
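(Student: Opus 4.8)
The plan is to reduce each item to two independent steps carried out in parallel: first peel off the curl-free part of $F$ by solving a scalar second-order elliptic equation, and then build a divergence-free vector potential for the remainder, with the prescribed boundary behaviour and the claimed $H^1$ bound. The two cases differ only in the boundary conditions imposed on $q$ and $\Phi$.

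For the scalar step I would use the Lax--Milgram lemma. In case (1) let $q\in H_0^1(\Omega;\C)$ be the unique solution of
\[
\int_\Omega\nabla q\cdot\nabla\overline{\psi}\,dx=\int_\Omega F\cdot\nabla\overline{\psi}\,dx,\qquad\psi\in H_0^1(\Omega;\C);
\]
in case (2) let $q\in H^1(\Omega;\C)$ solve the same identity tested against all $\psi\in H^1(\Omega;\C)$, which is solvable modulo constants since the right-hand side is a bounded functional vanishing on constants. In both cases $\left\Vert q\right\Vert_{H^1}\le C\left\Vert F\right\Vert_{L^2}$, and the field $G:=F-\nabla q$ satisfies $\div G=0$ in $\Omega$ (test with $\psi\in H_0^1$); in case (2) the larger test space additionally forces the weak normal trace $G\cdot\nu=0$ on $\bo$ after integration by parts. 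The task is thereby reduced to writing a divergence-free field $G\in L^2(\Omega;\C^3)$, with vanishing normal trace in case (2), as $G=\curl\Phi$ with $\div\Phi=0$ and the required boundary condition.

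For the vector step I would work on the Hilbert spaces
\[
X_N=\{\Phi\in\Hcurl\cap\Hdiv:\Phi\cdot\nu=0\text{ on }\bo\},\qquad X_T=\{\Phi\in\Hcurl\cap\Hdiv:\Phi\times\nu=0\text{ on }\bo\}.
\]
On $X_N$ in case (1) and on $X_T$ in case (2), I would solve the variational problem $a(\Phi,\Psi)=\int_\Omega G\cdot\curl\overline{\Psi}$ for the sesquilinear form $a(\Phi,\Psi)=\int_\Omega\bigl(\curl\Phi\cdot\curl\overline{\Psi}+\div\Phi\,\overline{\div\Psi}\bigr)$. The topological hypotheses enter through the coercivity of $a$: since $\Omega$ is simply connected the harmonic fields with vanishing normal trace are trivial (this serves case (1)), and since $\bo$ is connected the harmonic fields with vanishing tangential trace are trivial (this serves case (2)); in each case a Poincar\'e-type inequality then makes $a$ coercive. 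A standard argument, inserting gradient and curl test fields and using $\div G=0$, shows that the resulting $\Phi$ satisfies $\curl\Phi=G$, $\div\Phi=0$ and the prescribed boundary condition.

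The main obstacle, and the reason the boundary is taken of class $C^{1,1}$, is the passage from $\Phi\in X_N$ (resp. $X_T$) to $\Phi\in H^1(\Omega;\C^3)$, together with the estimate
\[
\left\Vert \Phi\right\Vert_{H^1}\le C\bigl(\left\Vert \curl\Phi\right\Vert_{L^2}+\left\Vert \div\Phi\right\Vert_{L^2}\bigr)\le C\left\Vert G\right\Vert_{L^2}\le C\left\Vert F\right\Vert_{L^2}.
\]
This continuous embedding $X_N,X_T\hookrightarrow H^1(\Omega;\C^3)$ is a Gaffney--Friedrichs type inequality; it holds on $C^{1,1}$ domains but may fail on merely Lipschitz ones, which is exactly why the regularity of $\bo$ cannot be relaxed here and why this is the analytic heart of the statement. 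Since this embedding and the well-posedness above are precisely the content of the cited references, I would in the end invoke them for these two ingredients and assemble the decomposition as described.
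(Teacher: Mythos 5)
The paper offers no proof of Lemma~\ref{lem:helmholtz}: it is quoted directly from \cite[Theorem 6.1]{AMROUCHE-SELOULA-2013} and \cite[Section 3.5]{AMROUCHE-BERNARDI-DAUGE-GIRAULT-1998}, so your attempt has to be measured against the proofs in that literature rather than against anything in the text. Your two-step construction --- Lax--Milgram for the scalar potential, then a coercive variational problem on $X_{N}$ (resp.\ $X_{T}$) for the vector potential, closed by the embedding of these spaces into $H^{1}$ on $C^{1,1}$ domains --- is essentially the standard argument given there, and it is sound in outline. You also correctly isolate the analytic heart, namely the Gaffney-type inequality, which is where the $C^{1,1}$ regularity of $\bo$ enters and which can fail on Lipschitz domains; note that in the paper's own terms this ingredient is exactly Lemma~\ref{lem:friedrichs} with $p=2$, so you could close your argument by citing that lemma rather than returning to the references.

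One point needs repair: your attribution of the topological hypotheses is only half the story, and the halves are crossed. Triviality of the harmonic fields $\mathcal{H}_{N}$ (curl-free, divergence-free, $\Phi\cdot\nu=0$) under simple connectedness, and of $\mathcal{H}_{T}$ (curl-free, divergence-free, $\Phi\times\nu=0$) under connectedness of $\bo$, is indeed what gives coercivity on $X_{N}$ and $X_{T}$ respectively, as you say. But the ``standard argument'' you invoke to show $\curl\Phi=G$ requires the \emph{other} hypothesis in each case. In case (2), the field $w=\curl\Phi-G$ is curl-free and divergence-free with $w\cdot\nu=0$ (since $\Phi\times\nu=0$ forces $\curl\Phi\cdot\nu=0$, and the Neumann construction gives $G\cdot\nu=0$), so concluding $w=0$ means $\mathcal{H}_{N}=\{0\}$, i.e.\ simple connectedness. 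In case (1), $w$ carries no boundary condition; one writes $w=\nabla\chi$ with $\chi$ harmonic (simple connectedness again) and then needs connectedness of $\bo$ to upgrade ``$\chi$ locally constant on $\bo$'' to ``$\chi$ constant''. These requirements are genuine: item (1) fails on a spherical shell (simply connected, disconnected boundary, where $\mathcal{H}_{T}\neq\{0\}$ obstructs), and item (2) fails on a solid torus (connected boundary, not simply connected, where $\mathcal{H}_{N}\neq\{0\}$ obstructs). Since the paper assumes both properties of $\Omega$, your proof does go through, but as written the sketch suggests each case needs only one hypothesis, which is false.
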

We shall need the following key estimate.
\begin{lem}[\cite{AMROUCHE-SELOULA-2013}]
\label{lem:friedrichs}Take $p\in(1,\infty)$ and $F\in L^{p}(\Omega;\C^{3})$
such that $\curl F\in L^{p}(\Omega;\C^{3})$, $\div F\in L^{p}(\Omega;\C)$
and either $F\cdot\nu=0$ or $F\times\nu=0$ on $\bo$. Then $F\in W^{1,p}(\Omega;\C^{3})$
and
\[
\left\Vert F\right\Vert _{W^{1,p}(\Omega;\C^{3})}\le C\bigl(\left\Vert \curl F\right\Vert _{L^{p}(\Omega;\C^{3})}+\left\Vert \div F\right\Vert _{L^{p}(\Omega;\C)}\bigr),
\]
for some $C>0$ depending only on $\Omega$ and $p$.
\end{lem}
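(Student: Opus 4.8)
The plan is to reduce the claim to the $L^{p}$ elliptic regularity of scalar and vector Laplace problems, which is available on the $C^{1,1}$ domain $\Omega$ through the Agmon--Douglis--Nirenberg and Calder\'on--Zygmund theories. Everything rests on the identity
\[
-\Delta F=\curl\curl F-\nabla\div F,
\]
so that, once $\curl F\in L^{p}$ and $\div F\in L^{p}$ are prescribed, $-\Delta F$ is determined as an element of $W^{-1,p}(\Omega;\C^{3})$. The sought bound is then an $L^{p}$ a priori estimate for a second-order elliptic (Hodge--Laplacian) boundary value problem, and the real issue is to complement $-\Delta$ with the given trace in a way that meets the complementing (Shapiro--Lopatinskii) condition.

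First I would strip off a gradient to reduce to a divergence-free field. If $F\times\nu=0$ I solve the Dirichlet problem $\Delta p=\div F$ in $\Omega$, $p=0$ on $\bo$; if $F\cdot\nu=0$ I solve the Neumann problem $\Delta p=\div F$, $\partial_{\nu}p=0$, whose compatibility condition $\int_{\Omega}\div F=\int_{\bo}F\cdot\nu=0$ holds. Standard $L^{p}$ regularity for the scalar Laplacian gives $p\in W^{2,p}$ with $\|p\|_{W^{2,p}}\le C\|\div F\|_{L^{p}}$. Setting $w=F-\nabla p$, the field $w$ is divergence-free, has the same curl as $F$, and retains the homogeneous trace: in the Dirichlet case $\nabla p$ is parallel to $\nu$ on $\bo$, so $w\times\nu=0$, and in the Neumann case $\nabla p\cdot\nu=0$, so $w\cdot\nu=0$. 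It thus suffices to prove $\|w\|_{W^{1,p}}\le C\|\curl w\|_{L^{p}}$ for a divergence-free $w$ with vanishing normal or tangential trace.

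For this reduced problem I would first note the Poincar\'e-type bound $\|w\|_{L^{p}}\le C\|\curl w\|_{L^{p}}$, valid because $\Omega$ is simply connected with connected boundary and hence carries no nontrivial harmonic fields with these traces. When $w\times\nu=0$ I then recover $w$ directly from $-\Delta w=\curl(\curl w)\in W^{-1,p}$ under the relative boundary conditions $w\times\nu=0$ and $\div w=0$, both homogeneous and explicit; the elliptic estimate for this system gives the gain $w\in W^{1,p}$. When $w\cdot\nu=0$, the analogous second boundary condition would involve the trace of $\curl w$, which is not available, so instead I write $w=\curl A$ with a vector potential satisfying $A\times\nu=0$, $\div A=0$ and $\|A\|_{W^{1,p}}\le C\|w\|_{L^{p}}$ (the $L^{p}$ analogue of Lemma~\ref{lem:helmholtz}); then $-\Delta A=\curl w\in L^{p}$ with the same relative boundary conditions yields $A\in W^{2,p}$, whence $w=\curl A\in W^{1,p}$ with the desired bound.

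The main obstacle is precisely this last elliptic estimate up to the boundary: one must verify that the relative boundary operators ($w\times\nu=0$ together with $\div w=0$), and dually the absolute ones for the normal case, satisfy the complementing condition for $-\Delta$, so that after localizing and flattening $\bo$ the full-strength $L^{p}$ singular-integral estimate applies; the lower-order curvature terms created by the flattening are exactly where the $C^{1,1}$ regularity of $\bo$ is consumed. The base case $p=2$ can instead be obtained more transparently from Gaffney's integration-by-parts identity $\int_{\Omega}|\nabla w|^{2}=\int_{\Omega}\bigl(|\curl w|^{2}+|\div w|^{2}\bigr)+\int_{\bo}(\text{curvature terms})$, and the general $p$ then follows by combining this with the Calder\'on--Zygmund bound, or by interpolation together with the duality between the relative and absolute boundary conditions.
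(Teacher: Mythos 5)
The paper offers no proof of this lemma: it is quoted verbatim from \cite{AMROUCHE-SELOULA-2013}, so the only meaningful comparison is with the proof in that reference, which your sketch essentially reconstructs. The architecture you propose is the standard and correct one: subtract $\nabla p$, with $p$ solving a scalar Dirichlet (resp.\ Neumann) problem, to reduce to a divergence-free field $w$ with the same homogeneous trace; then treat $-\Delta=\curl\curl-\nabla\div$ as an elliptic system under the relative boundary conditions $w\times\nu=0$, $\div w=0$, passing through a tangential vector potential in the normal case; finally invoke Agmon--Douglis--Nirenberg theory after checking the complementing condition. The reduction step, including the trace bookkeeping ($\nabla p\times\nu=0$ when $p=0$ on $\bo$, $\nabla p\cdot\nu=0$ in the Neumann case) and the Neumann compatibility condition, is sound.

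Two steps, however, are genuine gaps as written rather than omitted routine detail. First, the Poincar\'e-type bound $\|w\|_{L^{p}}\le C\|\curl w\|_{L^{p}}$ does not follow from the absence of harmonic fields alone: triviality of the kernel gives uniqueness, not a quantitative estimate. The standard derivation is a posteriori: one first proves the elliptic estimate \emph{with} the lower-order term, $\|w\|_{W^{1,p}}\le C(\|\curl w\|_{L^{p}}+\|w\|_{L^{p}})$, and then removes $\|w\|_{L^{p}}$ by a compactness--uniqueness contradiction argument (Rellich plus the topological hypotheses). Since your sketch uses the Poincar\'e bound as an input to the regularity estimate, the logic is circular unless you reverse this ordering; for $p=2$ one can appeal to Weber's compactness theorem independently of regularity, but for general $p$ no such shortcut is available. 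Second, in the normal case you invoke ``the $L^{p}$ analogue of Lemma~\ref{lem:helmholtz}'' to produce $A$ with $\curl A=w$, $\div A=0$, $A\times\nu=0$ and $\|A\|_{W^{1,p}}\le C\|w\|_{L^{p}}$; but that statement is itself a theorem of the same depth as the lemma being proved (it is another result of \cite{AMROUCHE-SELOULA-2013}), so quoting it begs the question. It can be established independently -- extend $w$ by zero across $\bo$ (divergence-free precisely because $w\cdot\nu=0$), take the Biot--Savart potential, and correct the tangential trace by $\nabla\chi$ with $\chi$ solving a scalar Dirichlet problem, using that the surface curl of the tangential trace equals $w\cdot\nu=0$ and that $\bo$ has genus zero (this is where simple connectedness with connected boundary enters) -- but some such construction must be supplied. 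A milder point of the same nature: $w$ is a priori only in $L^{p}$, so one cannot directly ``apply the a priori estimate'' to it; one should solve the boundary value problem with the given data in $W^{1,p}$ and identify the solution with $w$ through a duality-based uniqueness argument for $L^{p}$ solutions.
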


\subsection{Proof of Theorem~\ref{thm:joint-holder}}

With an abuse of notation, several positive constants depending only
on $\Omega$, $\Lambda$ and $\omega$ will be denoted by the same
letter $C$.

First, we express $E-G$ and $H$ by means of scalar and vector potentials
by using Lemma~\ref{lem:helmholtz}: there exist $q_{E}\in H_{0}^{1}(\Omega;\C)$,
$q_{H}\in H^{1}(\Omega;\C)$ and $\Phi_{E},\Phi_{H}\in H^{1}(\Omega;\C^{3})$
such that
\begin{equation}
E-G=\nabla q_{E}+\curl\Phi_{E}\quad\text{in }\Omega,\qquad H=\nabla q_{H}+\curl\Phi_{H}\quad\text{in }\Omega,\label{eq:E,H helmholtz}
\end{equation}
and
\begin{equation}
\left\{ \begin{array}{ll}
\div\Phi_{E}=0\quad&\mbox{in }\Omega,\\
\Phi_{E}\cdot\nu=0\quad&\mbox{on }\partial\Omega,
\end{array}\right.\qquad\left\{ \begin{array}{ll}
\div\Phi_{H}=0\quad&\mbox{in }\Omega,\\
\Phi_{H}\times\nu=0\quad&\mbox{on }\partial\Omega.
\end{array}\right.\label{eq:PhiE-PhiH}
\end{equation}
Moreover, there exists $C>0$ depending only on $\Omega$ such that
\begin{equation}
\left\Vert \Phi_{E}\right\Vert _{H^{1}(\Omega;\C^{3})}\le C\left\Vert (E,G)\right\Vert _{L^{2}(\Omega;\C^{3})^{2}},\qquad\left\Vert \Phi_{H}\right\Vert _{H^{1}(\Omega;\C^{3})}\le C\left\Vert H\right\Vert _{L^{2}(\Omega;\C^{3})}.\label{eq:PhiE-PhiH-W1p}
\end{equation}

By Lemma~\ref{lem:friedrichs}, the vector potentials enjoy additional
regularity.
\begin{lem}
\label{lem:Phi-W2p}Assume that \eqref{eq:Hyp-Ellip} holds true and
take $p\in[2,\infty)$. Take $J_{e},J_{m}\in L^{p}(\Omega;\C^{3})$
and $G\in W^{1,p}(\curl,\Omega)$. Let $(E,H)\in W^{1,p}(\curl,\Omega)^{2}$
be a weak solution of \eqref{eq:maxwell}, where
\[
W^{1,p}(\curl,\Omega):=\{F\in L^{p}(\Omega;\C^{3}):\curl F\in L^{p}(\Omega;\C^{3})\},
\]
equipped with the canonical norm. Then $\curl\Phi_{E},\curl\Phi_{H}\in W^{1,p}(\Omega;\C^{3})$
and
\begin{align*}
 & \left\Vert \curl\Phi_{E}\right\Vert _{W^{1,p}(\Omega;\C^{3})}\le C\left\Vert (H,J_{m},\curl G)\right\Vert _{L^{p}(\Omega;\C^{3})^{3}},\\
 & \left\Vert \curl\Phi_{H}\right\Vert _{W^{1,p}(\Omega;\C^{3})}\le C\left\Vert (E,J_{e})\right\Vert _{L^{p}(\Omega;\C^{3})^{2}},
\end{align*}
for some constant $C$ depending only on $\Omega$, $\Lambda$ and
$\omega$.\end{lem}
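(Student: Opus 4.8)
The plan is to apply the Friedrichs-type estimate of Lemma~\ref{lem:friedrichs} to the two divergence-free fields $\curl\Phi_{E}$ and $\curl\Phi_{H}$ separately. For each I need to control its curl in $L^{p}$, to note that its divergence vanishes, and to identify the correct boundary condition so that Lemma~\ref{lem:friedrichs} applies.

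First I would compute the double curls from Maxwell's equations \eqref{eq:maxwell} together with the Helmholtz decomposition \eqref{eq:E,H helmholtz}. Since $\curl\nabla=0$, taking the curl of \eqref{eq:E,H helmholtz} gives $\curl\curl\Phi_{E}=\curl(E-G)$ and $\curl\curl\Phi_{H}=\curl H$. The second and first equations of \eqref{eq:maxwell} then yield
\[
\curl\curl\Phi_{E}=-\ii\omega\mu H+J_{m}-\curl G,\qquad\curl\curl\Phi_{H}=\ii\omega\varepsilon E+J_{e}.
\]
By \eqref{eq:Hyp-Ellip} we have $\left|\mu\right|,\left|\varepsilon\right|\le\Lambda$ a.e., so the right-hand sides lie in $L^{p}(\Omega;\C^{3})$ and are bounded by $C\left\Vert (H,J_{m},\curl G)\right\Vert _{L^{p}}$ and $C\left\Vert (E,J_{e})\right\Vert _{L^{p}}$, respectively. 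Moreover $\div\curl\Phi_{E}=\div\curl\Phi_{H}=0$.

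The crux is the boundary conditions, and this is where the particular choices in the Helmholtz decomposition pay off. For $\curl\Phi_{E}$, I would use that $q_{E}\in H_{0}^{1}(\Omega;\C)$: since $q_{E}\equiv0$ on $\bo$, the field $\nabla q_{E}$ is purely normal there, i.e. $\nabla q_{E}\times\nu=0$ on $\bo$; combined with the boundary condition $(E-G)\times\nu=0$ coming from \eqref{eq:maxwell} and the decomposition \eqref{eq:E,H helmholtz}, this forces $\curl\Phi_{E}\times\nu=0$ on $\bo$. For $\curl\Phi_{H}$ I would instead use $\Phi_{H}\times\nu=0$ from \eqref{eq:PhiE-PhiH}: the normal trace of a curl equals the surface curl of the tangential trace of the field, and $\Phi_{H}\times\nu=0$ means precisely that the tangential trace of $\Phi_{H}$ vanishes, so $\curl\Phi_{H}\cdot\nu=0$ on $\bo$. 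Thus $\curl\Phi_{E}$ satisfies the tangential condition and $\curl\Phi_{H}$ the normal condition required by Lemma~\ref{lem:friedrichs}, which then delivers the two $W^{1,p}$ bounds.

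The technical point I expect to be the main obstacle is verifying the hypothesis $\curl\Phi_{E},\curl\Phi_{H}\in L^{p}$ of Lemma~\ref{lem:friedrichs}: a priori \eqref{eq:PhiE-PhiH-W1p} only yields $\Phi_{E},\Phi_{H}\in H^{1}$, hence $\curl\Phi_{E},\curl\Phi_{H}\in L^{2}$. I would resolve this by bootstrapping on the integrability exponent. Applying Lemma~\ref{lem:friedrichs} first with $p=2$ (legitimate since the right-hand sides lie in $L^{p}\subseteq L^{2}$) gives $\curl\Phi_{E},\curl\Phi_{H}\in H^{1}(\Omega;\C^{3})\hookrightarrow L^{6}$; a further application with exponent $6$ then raises the integrability, via the three-dimensional embedding of $W^{1,6}$, to every finite exponent. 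Once $\curl\Phi_{E},\curl\Phi_{H}\in L^{p}$ is known, the final application with the target exponent $p$ closes the argument. The remaining care is in making the two boundary-trace identities rigorous in the weak sense, which is standard but must be checked.
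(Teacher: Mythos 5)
Your proposal follows essentially the same route as the paper's proof: derive the div--curl system satisfied by $\Psi_{E}=\curl\Phi_{E}$ and $\Psi_{H}=\curl\Phi_{H}$ from \eqref{eq:maxwell} and \eqref{eq:E,H helmholtz}, identify the boundary conditions $\Psi_{E}\times\nu=0$ (using $q_{E}\in H_{0}^{1}(\Omega;\C)$) and $\Psi_{H}\cdot\nu=0$ (using the normal-trace-of-curl identity together with $\Phi_{H}\times\nu=0$), and conclude with Lemma~\ref{lem:friedrichs}. Your additional bootstrap on the integrability exponent, which verifies the hypothesis $\curl\Phi_{E},\curl\Phi_{H}\in L^{p}(\Omega;\C^{3})$ before the final application of Lemma~\ref{lem:friedrichs}, is correct and in fact makes explicit a step that the paper's proof leaves implicit.
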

\begin{proof}
Set $\Psi_{E}:=\curl\Phi_{E}$. 
Observe that for every test function $\Phi\in C^\infty(\overline\Omega;\C^3)$  we have
\[
\begin{split}
\int_\Omega\! \curl(\nabla q_E)\cdot\Phi-\nabla q_E\cdot\curl\Phi \,dx &= \int_\Omega\!  q_E\div(\curl\Phi) \,dx - \int_{\bo}\!q_E \curl\Phi\cdot\nu\,ds\\
& =0,
\end{split}
\]
which implies that $\nabla q_E\times\nu =0$ on $\bo$  (\cite[Theorem~3.33]{MONK-2003}). Hence, by \eqref{eq:E,H helmholtz} and the
third equation of \eqref{eq:maxwell} we obtain
\[
\Psi_{E}\times\nu=(\curl\Phi_{E})\times\nu=(E-G)\times\nu-\nabla q_{E}\times\nu=0\quad\text{on }\bo,
\]
Thus, using the first equation
of \eqref{eq:maxwell} and the identities $\curl\nabla=0$ and $\div\,\curl=0$
we obtain 
\begin{equation}
\left\{ \begin{array}{ll}
\curl\Psi_{E}=-\ii\omega\mu H+J_{m}-\curl G\quad&\text{in }\Omega,\\
\div\Psi_{E}=0\quad&\mbox{in }\Omega,\\
\Psi_{E}\times\nu=0\quad&\mbox{on }\partial\Omega.
\end{array}\right.\label{eq:PsiE}
\end{equation}
Therefore, by Lemma~\ref{lem:friedrichs} we have that $\curl\Phi_{E}\in W^{1,p}(\Omega;\C^{3})$
and
\[
\left\Vert \curl\Phi_{E}\right\Vert _{W^{1,p}(\Omega;\C^{3})}\le C\left\Vert (H,J_{m},\curl G)\right\Vert _{L^{p}(\Omega;\C^{3})^{3}}.
\]

The proof for $\Phi_{H}$ is similar, only the boundary conditions
have to be handled in a different way. As above, set $\Psi_{H}:=\curl\Phi_{H}$. 
For every test function $\varphi\in C^\infty(\overline\Omega;\C)$  we have
\[
\int_\Omega \Psi_H\cdot\nabla\varphi -\varphi\, \div\Psi_H \,dx = \int_\Omega \curl\Phi_H\cdot\nabla\varphi\,dx =0
\]
since $\curl\nabla=0$ and $\Phi_H\times\nu=0$ on $\bo$. This identity implies that
\[
\Psi_{H}\cdot\nu=0\quad\text{on }\bo.
\]
Moreover $\div\Psi_{H}=0$ in $\Omega$ and using the second equation
of \eqref{eq:maxwell} we obtain $\curl\Psi_{H}=\ii\omega\varepsilon E+J_{e}\in L^{p}(\Omega;\C^{3})$.
Therefore, by Lemma~\ref{lem:friedrichs} we have that $\curl\Phi_{H}\in W^{1,p}(\Omega;\C^{3})$
and
\[
\left\Vert \curl\Phi_{H}\right\Vert _{W^{1,p}(\Omega;\C^{3})}\le C\left\Vert (E,J_{e})\right\Vert _{L^{p}(\Omega;\C^{3})^{2}}.
\]
This concludes the proof.
\qed
\end{proof}
We are now in a position to prove Theorem~\ref{thm:joint-holder}.

\vspace{5pt}\noindent\emph{Proof of Theorem~\ref{thm:joint-holder}$\,$}
The proof is divided into two steps. 

\emph{Step 1. $W^{1,6}$-regularity of the scalar potentials.} By
Lemma~\ref{lem:Phi-W2p} with $p=2$ and the Sobolev embedding theorem,
we have that $\curl\Phi_{E},\curl\Phi_{H}\in L^{6}(\Omega;\C^{3})$
and
\begin{equation}
\left\Vert (\curl\Phi_{E},\curl\Phi_{H})\right\Vert _{L^{6}(\Omega;\C^{3})^{2}}\le C\left\Vert (E,H,\curl G,J_{e},J_{m})\right\Vert _{L^{2}(\Omega;\C^{3})^{5}}.\label{eq:PhiE-PhiH-W16}
\end{equation}

By \eqref{eq:E,H helmholtz} and \eqref{eq:weak1} with $\Phi_1=\nabla\varphi$ for $\varphi\in H^1_0(\Omega;\C)$ (arguing as in the first part of the proof of Lemma~\ref{lem:Phi-W2p}, we have $\Phi_1\in\H0curl$) we obtain
\[
  \int_\Omega\left( \ii\omega\epsilon (G+\curl\Phi_E+\nabla q_E)+J_e \right)\cdot\nabla\varphi\,dx = 0,\qquad \varphi\in H^1_0(\Omega;\C).
\]
In other words,  $q_{E}$ is a weak
solution of 
\begin{equation}
\left\{ \begin{array}{ll}
-\div(\epsilon\nabla q_{E})=\div(\epsilon G+\epsilon\curl\Phi_{E}-\ii\omega^{-1}J_{e})\quad&\mbox{in }\Omega,\\
q_{E}=0\quad&\mbox{on }\bo.
\end{array}\right.\label{eq:PDE-qE}
\end{equation}
Similarly, using \eqref{eq:E,H helmholtz} and \eqref{eq:weak2} with $\Phi_2=\nabla\varphi$ for $\varphi\in H^1(\Omega;\C)$
we have
\[
 \int_\Omega \left(\curl G+\ii\omega\mu (\nabla q_H+\curl\Phi_H)-J_m\right)\cdot\nabla\varphi\,dx = 0,\qquad \varphi\in H^1(\Omega;\C).
\]
In other words, $q_H$ is a weak solution of
\begin{equation}
\left\{ \begin{array}{ll}
-\div(\mu\nabla q_{H})=\div(\mu\curl\Phi_{H}+\ii\omega^{-1}J_{m}-\ii\omega^{-1}\curl G)\quad&\mbox{in }\Omega,\\
-(\mu\nabla q_{H})\cdot\nu=(\mu\curl\Phi_{H}+\ii\omega^{-1}J_{m}-\ii\omega^{-1}\curl G)\cdot\nu\quad&\mbox{on }\bo.
\end{array}\right.\label{eq:PDE-qH}
\end{equation}
Therefore, by the $L^{p}$ theory for elliptic equations with complex
coefficients (see, e.g., \cite[Theorem 1]{AUSCHER-QAFSAOUI-2002})
applied to the above boundary value problems, we obtain $\nabla q_{E},\nabla q_{H}\in L^{6}(\Omega;\C^{3})$
and
\begin{multline}
\left\Vert \nabla q_{E},\nabla q_{H})\right\Vert _{L^{6}(\Omega;\C^{3})^{2}}
\le C\bigl(\left\Vert (\curl\Phi_{E},\curl\Phi_{H})\right\Vert _{L^{6}(\Omega;\C^{3})^{2}}\bigr.
\\\bigl.+\left\Vert G\right\Vert _{W^{1,6}(\curl,\Omega)}+\left\Vert (J_{e},J_{m})\right\Vert _{L^{6}(\Omega;\C^{3})^{2}}\bigr).\label{eq:qE-qH-W16}
\end{multline}

\emph{Step 2. $C^{1,\alpha}$-regularity of the scalar potentials.}
Combining \eqref{eq:PhiE-PhiH-W16} and \eqref{eq:qE-qH-W16} we have
$E,H\in L^{6}(\Omega;\C^{3})$ and
\begin{multline*}
\left\Vert (E,H)\right\Vert _{L^{6}(\Omega;\C^{3})^{2}}\\
\le C\bigl(\left\Vert (E,H)\right\Vert _{L^{2}(\Omega;\C^{3})^{2}}+\left\Vert G\right\Vert _{W^{1,6}(\curl,\Omega)}+\left\Vert (J_{e},J_{m})\right\Vert _{L^{6}(\Omega;\C^{3})^{2}}\bigr).
\end{multline*}
Thus, by Lemma~\ref{lem:Phi-W2p} with $p=6$ we obtain $\curl\Phi_{E},\curl\Phi_{H}\in W^{1,6}(\Omega;\C^{3})$
and
\begin{multline*}
\left\Vert (\curl\Phi_{E},\curl\Phi_{H})\right\Vert _{W^{1,6}(\Omega;\C^{3})^{2}}\\
\le C\bigl(\left\Vert (E,H)\right\Vert _{L^{2}(\Omega;\C^{3})^{2}}+\left\Vert G\right\Vert _{W^{1,6}(\curl,\Omega)}+\left\Vert (J_{e},J_{m})\right\Vert _{L^{6}(\Omega;\C^{3})^{2}}\bigr).
\end{multline*}
By the Sobolev embedding theorem, this implies $\curl\Phi_{E},\curl\Phi_{H}\in C^{0,\frac{1}{2}}(\overline{\Omega};\C^{3})$
and
\begin{multline*}
\left\Vert (\curl\Phi_{E},\curl\Phi_{H})\right\Vert _{C^{0,\frac{1}{2}}(\overline{\Omega};\C^{3})^{2}}\\
\le C\bigl(\left\Vert (E,H)\right\Vert _{L^{2}(\Omega;\C^{3})^{2}}+\left\Vert G\right\Vert _{W^{1,6}(\curl,\Omega)}+\left\Vert (J_{e},J_{m})\right\Vert _{L^{6}(\Omega;\C^{3})^{2}}\bigr).
\end{multline*}
In view of \eqref{eq:epsi-holder}-\eqref{eq:mu-holder}, by applying
classical Schauder estimates for elliptic systems \cite{GIAQUINTA-MARTINAZZI-2005,2008-morrey}
to \eqref{eq:PDE-qE} and \eqref{eq:PDE-qH} we obtain
\begin{multline*}
\left\Vert (q_{E},q_{H})\right\Vert _{C^{1,\alpha}(\overline{\Omega};\C)^{2}}\\
\le C\bigl(\left\Vert (E,H)\right\Vert _{L^{2}(\Omega;\C^{3})^{2}}+\left\Vert G\right\Vert _{C^{1,\alpha}(\curl,\Omega)}+\left\Vert (J_{e},J_{m})\right\Vert _{\Calpha^{2}}\bigr).
\end{multline*}
Finally, the result follows from \eqref{eq:E,H helmholtz} and the
last two estimates.\qed

\subsection{Higher regularity}

The proof of Theorem~\ref{thm:joint-holder} is based on the regularity
of the scalar and vector potentials of the electric and magnetic fields.
In particular, the regularity of $\Phi_{E}$ and $\Phi_{H}$ follows
from Lemma \ref{lem:friedrichs}, while the regularity of $q_{E}$
and $q_{H}$ follows from standard $L^{p}$ and Schauder estimates
for elliptic systems. Since all these estimates admit higher regularity
generalisations \cite{AMROUCHE-SELOULA-2013,2008-morrey}, by following
the argument outlined above we immediately obtain the corresponding
higher regularity result.
\begin{thm}
\label{thm:joint-higher}Assume that \eqref{eq:Hyp-Ellip} holds true,
that $\bo$ is of class $C^{N+1,1}$ and that
\[
\epsilon,\mu\in C^{N,\alpha}(\overline{\Omega};\C^{3\times3}),\qquad\left\Vert (\epsilon,\mu)\right\Vert _{C^{N,\alpha}(\overline{\Omega};\C^{3\times3})^{2}}\le\Lambda,
\]
for $\alpha\in(0,\frac{1}{2}]$ and $N\in\N$. Take $J_{e},J_{m}\in C^{N,\alpha}(\overline{\Omega};\C^{3})$
and $G\in C^{N+1,\alpha}(\curl,\Omega)$. Let $(E,H)\in\Hcurl^{2}$
be a weak solution of \eqref{eq:maxwell}. Then $E,H\in C^{N,\alpha}(\overline{\Omega};\C^{3})$
and 
\begin{multline*}
\left\Vert (E,H)\right\Vert _{C^{N,\alpha}(\overline{\Omega};\C^{3})}\\
\le C\bigl(\left\Vert (E,H)\right\Vert _{L^{2}(\Omega;\C^{3})^{2}}+\left\Vert G\right\Vert _{C^{N+1,\alpha}(\curl,\Omega)}+\left\Vert (J_{e},J_{m})\right\Vert _{C^{N,\alpha}(\overline{\Omega};\C^{3})^{2}}\bigr)
\end{multline*}
for some constant $C$ depending only on $\Omega$, $\Lambda$, $\omega$
and $N$.
\end{thm}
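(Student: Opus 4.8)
The plan is to reproduce the two-potential scheme of the proof of Theorem~\ref{thm:joint-holder}, keeping the Helmholtz decompositions \eqref{eq:E,H helmholtz}--\eqref{eq:PhiE-PhiH} of $E-G$ and $H$ fixed and upgrading only the quantities $\Psi_E:=\curl\Phi_E$, $\Psi_H:=\curl\Phi_H$ and the scalar potentials $q_E,q_H$, exactly as in Steps~1--2 there. Since the $L^p$ estimate for the $\curl$--$\div$ system (Lemma~\ref{lem:friedrichs}) and the elliptic $L^p$ and Schauder estimates for \eqref{eq:PDE-qE}--\eqref{eq:PDE-qH} all admit higher-order versions \cite{AMROUCHE-SELOULA-2013,2008-morrey}, each round of the scheme gains one derivative, and finitely many rounds followed by a H\"older capstone will yield the claim. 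I would fix $p=6$ once and for all, so that $W^{m+1,6}(\Omega)\hookrightarrow C^{m,1/2}(\overline{\Omega})\subset C^{m,\alpha}(\overline{\Omega})$ for every $m\in\N$ (recall $\alpha\le\tfrac12$), and I would take the base case $N=0$ to be Theorem~\ref{thm:joint-holder}, which in particular gives $E,H\in C^{0,\alpha}(\overline{\Omega};\C^{3})\subset L^{6}(\Omega;\C^{3})$. Recall also that $G\in C^{N+1,\alpha}(\curl,\Omega)$ means $G,\curl G\in C^{N,\alpha}(\overline{\Omega};\C^{3})$.

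First I would run a Sobolev differentiability bootstrap, proving by induction on $k$ that $E,H\in W^{k,6}(\Omega;\C^{3})$ for $k=0,1,\dots,N$. Assuming $E,H\in W^{k,6}$ with $k\le N-1$, the system \eqref{eq:PsiE} for $\Psi_E$ (and its analogue $\curl\Psi_H=\ii\omega\epsilon E+J_e$, $\div\Psi_H=0$, $\Psi_H\cdot\nu=0$ from the proof of Lemma~\ref{lem:Phi-W2p}) has right-hand sides in $W^{k,6}$, because $\epsilon,\mu\in C^{N,\alpha}\subset W^{k,\infty}$ and $J_e,J_m,\curl G\in C^{N,\alpha}\subset W^{k,6}$; the higher-order Friedrichs estimate then gives $\Psi_E,\Psi_H\in W^{k+1,6}$. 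Feeding this into \eqref{eq:PDE-qE}--\eqref{eq:PDE-qH}, whose divergence-form data $\epsilon G+\epsilon\Psi_E-\ii\omega^{-1}J_e$ and $\mu\Psi_H+\ii\omega^{-1}J_m-\ii\omega^{-1}\curl G$ are now in $W^{k+1,6}$ (here I use $\epsilon,\mu\in C^{N,\alpha}\subset W^{k+1,\infty}$ since $k+1\le N$), the higher-order $L^p$ elliptic estimates give $q_E,q_H\in W^{k+2,6}$. Hence $E=G+\nabla q_E+\Psi_E$ and $H=\nabla q_H+\Psi_H$ lie in $W^{k+1,6}$, closing the induction and producing $E,H\in W^{N,6}$.

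With $E,H\in W^{N,6}$ in hand I would perform one final, H\"older-valued round. The data of the systems for $\Psi_E,\Psi_H$ are now in $W^{N,6}$, so the Friedrichs estimate yields $\Psi_E,\Psi_H\in W^{N+1,6}\hookrightarrow C^{N,\alpha}(\overline{\Omega};\C^{3})$. Consequently the divergence-form data of \eqref{eq:PDE-qE}--\eqref{eq:PDE-qH} belong to $C^{N,\alpha}$, and the higher-order Schauder estimates (using $\bo\in C^{N+1,1}$ and $\epsilon,\mu\in C^{N,\alpha}$) give $q_E,q_H\in C^{N+1,\alpha}(\overline{\Omega};\C)$, so that $\nabla q_E,\nabla q_H\in C^{N,\alpha}$. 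The decompositions \eqref{eq:E,H helmholtz} then place $E,H$ in $C^{N,\alpha}(\overline{\Omega};\C^{3})$, and chaining the finitely many estimates accumulated along the way yields the asserted bound with a constant depending only on $\Omega,\Lambda,\omega$ and $N$.

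The step I expect to demand the most care is the matching of function spaces that makes the induction actually gain a derivative. A naive pure-H\"older iteration stalls: the $\curl$--$\div$ estimate gains a Sobolev derivative but the embedding $W^{m+1,6}\hookrightarrow C^{m,1/2}$ costs one, so $C^{m,\alpha}$ data on $\curl\Psi$ would only return $\Psi\in C^{m,\alpha}$, not $C^{m+1,\alpha}$. Separating the differentiability bootstrap (carried out in $W^{k,6}$) from the single concluding H\"older step, as above, circumvents this. One must also check that the higher-order $L^p$ and Schauder estimates tolerate merely $C^{N,\alpha}$ (rather than smooth) coefficients and that the Neumann problem \eqref{eq:PDE-qH} is treated with the correct normal-trace regularity of its data; both are standard but should be verified at each level. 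Alternatively, a $\curl$--$\div$ Schauder estimate in H\"older spaces (obtainable from $-\Delta F=\curl\curl F-\nabla\div F$ together with elliptic Schauder) would gain a full H\"older derivative and collapse the argument to a one-line induction on $N$.
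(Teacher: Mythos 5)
Your proof is correct and follows essentially the same route as the paper: the paper's own justification of Theorem~\ref{thm:joint-higher} is precisely the observation that the curl--div estimate of Lemma~\ref{lem:friedrichs} and the $L^p$/Schauder estimates for \eqref{eq:PDE-qE}--\eqref{eq:PDE-qH} admit higher-order generalisations \cite{AMROUCHE-SELOULA-2013,2008-morrey}, so that the two-step scheme of Theorem~\ref{thm:joint-holder} iterates. Your explicit organisation --- a $W^{k,6}$ bootstrap for $k\le N$ followed by a single H\"older capstone via $W^{N+1,6}\hookrightarrow C^{N,\frac{1}{2}}\subset C^{N,\alpha}$ --- is exactly the natural way to implement this using only the Sobolev form of the Friedrichs estimate, and your remark that a pure H\"older iteration routed through Sobolev embeddings would stall correctly identifies why the bootstrap should be carried out in $W^{k,6}$ rather than in H\"older spaces.
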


\section{\label{sec:Bi-anisotropic} The case of bianisotropic materials}

In this section, we investigate the H\"older regularity of the solutions
of the following problem 
\begin{equation}
\left\{ \begin{array}{ll}
\curl H=\ii\omega\left(\varepsilon E+\xi H\right)+J_{e}\quad&\mbox{in }\Omega,\\
\curl E=-\ii\omega\left(\zeta E+\mu H\right)+J_{m}\quad&\mbox{in }\Omega,\\
E\times\nu=G\times\nu\quad&\text{on }\bo.
\end{array}\right.\label{eq:maxwell for bi-anisotropic}
\end{equation}
In this general case, \eqref{eq:Hyp-Ellip} is not sufficient to ensure
ellipticity. As we will see, the leading order coefficient of the
coupled elliptic system corresponding to \eqref{eq:PDE-qE}-\eqref{eq:PDE-qH}
is 
\[
A=A_{ij}^{\alpha\beta}=\left[\begin{array}{cccc}
\Re\varepsilon & -\Im\varepsilon & \Re\xi & -\Im\xi\\
\Im\varepsilon & \Re\varepsilon & \Im\xi & \Re\xi\\
\Re\zeta & -\Im\zeta & \Re\mu & -\Im\mu\\
\Im\zeta & \Re\zeta & \Im\mu & \Re\mu
\end{array}\right],
\]
where the Latin indices $i,j=1,\dots,4$ identify the different $3\times3$
block sub-matrices, whereas the Greek letters $\alpha,\beta=1,2,3$
span each of these $3\times3$ block sub-matrices. We assume that
$A$ is in $L^{\infty}(\Omega;\mathbb{R})^{12\times12}$ and that
satisfies a strong Legendre condition (as in \cite{CHEN-WU-1998,GIAQUINTA-MARTINAZZI-2005}),
namely 
\begin{equation}
A_{ij}^{\alpha\beta}\eta_{\alpha}^{i}\eta_{\beta}^{j}\ge\Lambda^{-1}\left|\eta\right|^{2},\;\eta\in\mathbb{R}^{12}\quad\mbox{ and}\quad\bigl|A_{ij}^{\alpha\beta}\bigr|\le\Lambda\qquad\mbox{ a.e. in \ensuremath{\Omega}}\label{eq:strong legendre condition}
\end{equation}
for some $\Lambda>0$. This condition is satisfied by a large class
of materials, including chiral materials and all natural materials
\cite[Lemma 10 and Remark 11]{AC2014}. Moreover, generalising the
regularity assumptions given in \eqref{eq:epsi-holder}-\eqref{eq:mu-holder},
we suppose that
\begin{equation}
\epsilon,\xi,\zeta,\mu\in C^{0,\alpha}(\overline{\Omega};\C^{3\times3}),\qquad\left\Vert (\epsilon,\xi,\zeta,\mu)\right\Vert _{C^{0,\alpha}(\overline{\Omega};\C^{3\times3})^{4}}\le\Lambda\label{eq:bianisotropic-holder}
\end{equation}
for some $\alpha\in(0,\frac{1}{2}]$.

The main result of this section reads as follows.
\begin{thm}
\label{thm:bianisotropic}Assume that \eqref{eq:strong legendre condition}
and \eqref{eq:bianisotropic-holder} hold true. Take $J_{e},J_{m}\in C^{0,\alpha}(\overline{\Omega};\C^{3})$
and $G\in C^{1,\alpha}(\curl,\Omega)$. Let $(E,H)\in\Hcurl^{2}$
be a weak solution of \eqref{eq:maxwell for bi-anisotropic}. Then
$E,H\in\Calpha$ and 
\begin{multline*}
\left\Vert (E,H)\right\Vert _{\Calpha^{2}}\\
\le C\bigl(\left\Vert (E,H)\right\Vert _{L^{2}(\Omega;\C^{3})^{2}}+\left\Vert G\right\Vert _{C^{1,\alpha}(\curl,\Omega)}+\left\Vert (J_{e},J_{m})\right\Vert _{C^{0,\alpha}(\overline{\Omega};\C^{3})^{2}}\bigr)
\end{multline*}
 for some constant $C$ depending only on $\Omega$, $\Lambda$ and
$\omega$.\end{thm}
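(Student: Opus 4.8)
The plan is to follow the two-step bootstrap scheme used in the proof of Theorem~\ref{thm:joint-holder}, the only essential difference being that the scalar potentials $q_E$ and $q_H$ now solve a \emph{coupled} elliptic system whose ellipticity is governed by \eqref{eq:strong legendre condition} rather than by \eqref{eq:Hyp-Ellip}. First I would apply the Helmholtz decomposition of Lemma~\ref{lem:helmholtz} exactly as in \eqref{eq:E,H helmholtz}--\eqref{eq:PhiE-PhiH}, writing $E-G=\nabla q_E+\curl\Phi_E$ and $H=\nabla q_H+\curl\Phi_H$ with $q_E\in H_0^1(\Omega;\C)$, $q_H\in H^1(\Omega;\C)$ and vector potentials satisfying $\div\Phi_E=\div\Phi_H=0$, $\Phi_E\cdot\nu=0$ and $\Phi_H\times\nu=0$ on $\bo$. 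This step does not see the constitutive relations and carries over unchanged.

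Next I would prove the analogue of Lemma~\ref{lem:Phi-W2p} for \eqref{eq:maxwell for bi-anisotropic}. Setting $\Psi_E:=\curl\Phi_E$ and $\Psi_H:=\curl\Phi_H$, the boundary conditions $\Psi_E\times\nu=0$ and $\Psi_H\cdot\nu=0$ on $\bo$ are obtained verbatim. Since $\curl\Psi_E=\curl(E-G)=-\ii\omega(\zeta E+\mu H)+J_m-\curl G$ and $\curl\Psi_H=\curl H=\ii\omega(\epsilon E+\xi H)+J_e$ belong to $L^p$ whenever $E,H\in L^p$, and $\div\Psi_E=\div\Psi_H=0$, Lemma~\ref{lem:friedrichs} yields $\Psi_E,\Psi_H\in W^{1,p}(\Omega;\C^3)$ controlled by $\|(E,H,J_e,J_m,\curl G)\|_{L^p}$. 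The only change from Lemma~\ref{lem:Phi-W2p} is that each curl now depends on \emph{both} fields through the bianisotropic coupling, which is harmless for the bootstrap.

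The core of the argument is the coupled system for $(q_E,q_H)$. Taking the divergence of the two constitutive relations in \eqref{eq:maxwell for bi-anisotropic} and using $\div\,\curl=0$ gives $\div(\epsilon E+\xi H)=\ii\omega^{-1}\div J_e$ and $\div(\zeta E+\mu H)=-\ii\omega^{-1}\div J_m$; inserting the decompositions $E=\nabla q_E+\Psi_E+G$ and $H=\nabla q_H+\Psi_H$ turns these into
\[
\left\{\begin{array}{l}
-\div(\epsilon\nabla q_E+\xi\nabla q_H)=\div\bigl(\epsilon(\Psi_E+G)+\xi\Psi_H-\ii\omega^{-1}J_e\bigr),\\
-\div(\zeta\nabla q_E+\mu\nabla q_H)=\div\bigl(\zeta(\Psi_E+G)+\mu\Psi_H+\ii\omega^{-1}J_m\bigr).
\end{array}\right.
\]
Splitting into real and imaginary parts, the four real unknowns $(\Re q_E,\Im q_E,\Re q_H,\Im q_H)$ solve a real divergence-form $12\times12$ system whose leading coefficient is exactly the matrix $A$ of the statement: the first two rows of $A$ reproduce $\Re(\epsilon\nabla q_E+\xi\nabla q_H)$ and $\Im(\epsilon\nabla q_E+\xi\nabla q_H)$, and the last two rows do the same for $\zeta\nabla q_E+\mu\nabla q_H$. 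The problem is of mixed type: $q_E=0$ on $\bo$ (from $q_E\in H_0^1$), while the conormal condition for $q_H$ follows, as in the derivation of \eqref{eq:PDE-qH}, from $(\zeta E+\mu H)\cdot\nu=\ii\omega^{-1}(\curl E-J_m)\cdot\nu$ together with the tangential identity $\curl E\cdot\nu=\div_{\bo}(E\times\nu)=\div_{\bo}(G\times\nu)=\curl G\cdot\nu$ on $\bo$.

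The hard part, and the only genuinely new ingredient relative to Theorem~\ref{thm:joint-holder}, is that this system does not decouple, so the scalar $L^p$ and Schauder estimates cannot be used; one must instead invoke the \emph{vectorial} $L^p$ and Schauder theory for elliptic systems, which applies precisely because \eqref{eq:strong legendre condition} makes $A$ satisfy the strong Legendre condition \cite{CHEN-WU-1998,GIAQUINTA-MARTINAZZI-2005,2008-morrey}. Granting this, the bootstrap runs as before: Lemma~\ref{lem:friedrichs} with $p=2$ and Sobolev embedding give $\Psi_E,\Psi_H\in L^6$; the system $L^6$ estimate then yields $\nabla q_E,\nabla q_H\in L^6$, hence $E,H\in L^6$; feeding this back into Lemma~\ref{lem:friedrichs} with $p=6$ gives $\Psi_E,\Psi_H\in W^{1,6}(\Omega;\C^3)\hookrightarrow C^{0,\frac12}(\overline{\Omega};\C^3)$; and finally the Schauder estimate for the system, using \eqref{eq:bianisotropic-holder}, gives $(q_E,q_H)\in C^{1,\alpha}(\overline{\Omega};\C)^2$. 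Combining this with $\Psi_E,\Psi_H\in C^{0,\frac12}\subseteq C^{0,\alpha}$ (recall $\alpha\le\frac12$) through \eqref{eq:E,H helmholtz} delivers $E,H\in\Calpha$ with the stated estimate.
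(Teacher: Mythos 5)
Your proposal is correct and follows essentially the same route as the paper: Helmholtz decomposition, the coupled analogue of Lemma~\ref{lem:Phi-W2p} via Lemma~\ref{lem:friedrichs}, the fully coupled divergence-form system for $(q_E,q_H)$ with mixed Dirichlet/conormal boundary conditions made strongly elliptic by \eqref{eq:strong legendre condition}, and the same $L^2\to L^6\to C^{0,1/2}\to C^{1,\alpha}$ bootstrap using the vectorial $L^p$ and Schauder theories. The only cosmetic difference is that the paper keeps the divergence-free term $-\ii\omega^{-1}\curl G$ in the right-hand side of the second interior equation so that the same vector field appears in the conormal condition; since you derive that boundary condition correctly from $(\zeta E+\mu H)\cdot\nu$ and the tangential identity, this changes nothing.
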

\begin{proof}
The main ingredients are the same used for the proof of Theorem~\ref{thm:joint-holder}.
In particular, the regularity result on the vector potentials $\Phi_{E}$
and $\Phi_{H}$ of $E-G$ and $H$ given in Lemma~\ref{lem:Phi-W2p}
holds true also in this case. The only difference lies in the fact
that, since the bianisotropy mixes the electric and magnetic properties,
the corresponding estimates will be
\begin{equation}
\left\Vert (\curl\Phi_{E},\curl\Phi_{H})\right\Vert _{W^{1,p}(\Omega;\C^{3})^{2}}\le C\left\Vert (E,H,J_{e},J_{m},\curl G)\right\Vert _{L^{p}(\Omega;\C^{3})^{5}}.\label{eq:bian1}
\end{equation}
Similarly, as far as the scalar potentials are concerned, the two
equations \eqref{eq:PDE-qE}-\eqref{eq:PDE-qH} become a fully coupled
elliptic system, namely
\[
\begin{array}{l}
-\div(\epsilon\nabla q_{E}+\xi\nabla q_{H})=\div(\epsilon G+\epsilon\curl\Phi_{E}+\xi\curl\Phi_{H}-\ii\omega^{-1}J_{e}),\\
-\div(\zeta\nabla q_{E}+\mu\nabla q_{H})=\div(\zeta G+\zeta\curl\Phi_{E}+\mu\curl\Phi_{H}+\ii\omega^{-1}(J_{m}-\curl G)),
\end{array}
\]
in $\Omega$, augmented with the boundary conditions
\[
\begin{array}{l}
 q_{E}=0,\\
-(\zeta\nabla q_{E}+\mu\nabla q_{H})\cdot\nu=(\zeta G+\zeta\curl\Phi_{E}+\mu\curl\Phi_{H}+\ii\omega^{-1}(J_{m}-\curl G))\cdot\nu.
\end{array}
\]
on $\bo$. More precisely, the weak form of this system reads{\small
\begin{align*}
&\int_\Omega (\epsilon\nabla q_{E}+\xi\nabla q_{H})\cdot\nabla\varphi_1\,dx=\int_\Omega  (\epsilon G+\epsilon\curl\Phi_{E}+\xi\curl\Phi_{H}-\ii\omega^{-1}J_{e})\cdot\nabla\varphi_1\,dx, \\
&\int_\Omega (\zeta\nabla q_{E}+\mu\nabla q_{H})\cdot\nabla\varphi_2\,dx=\int_\Omega  \biggl(\zeta G+\zeta\curl\Phi_{E}+\mu\curl\Phi_{H}+\frac{\curl G-J_{m}}{\ii\omega}\biggr)\cdot\nabla\varphi_2\,dx,
\end{align*}
}for every $(\varphi_1,\varphi_2)\in H^1_0(\Omega;\C)\times H^1(\Omega;\C)$. 
By \eqref{eq:strong legendre condition}, this system is strongly
elliptic, and since the coefficients are H\"older continuous, both the
$L^{p}$ theory and the Schauder theory are applicable \cite[Theorem 6.4.8]{2008-morrey}.

We now present a quick sketch of the proof, which follows exactly
the same structure of the proof of Theorem~\ref{thm:joint-holder}.
By \eqref{eq:bian1} with $p=2$ we first deduce that $\curl\Phi_{E}$
and $\curl\Phi_{H}$ belong to $L^{6}$. Thus, by applying the $L^{p}$
theory to the elliptic system above, we deduce that the scalar potentials
are in $W^{1,6}$. By \eqref{eq:E,H helmholtz}, this implies that
$E$ and $H$ are in $L^{6}$. Using again \eqref{eq:bian1} with
$p=6$ we deduce that $\curl\Phi_{E}$ and $\curl\Phi_{H}$ are H\"older
continuous. Finally, by the Schauder estimates we deduce that $\nabla q_{E}$
and $\nabla q_{H}$ are H\"older continuous. The corresponding norm
estimate follows as in the proof of Theorem~\ref{thm:joint-holder}.
\qed\end{proof}

\section{\label{sec:H=0000F6lder-regularity-of}H\"older regularity of the electric
field $E$}

The proof of Theorem~\ref{thm:E-holder} is based on standard elliptic
estimates in Campanato spaces \cite{CAMPANATO-1980}, which we now
introduce. For $\lambda\ge0$, let $L^{2,\lambda}(\Omega;\C)$ be
the Banach space of functions $u\in L^{2}\left(\Omega;\C\right)$
such that 
\[
[u]_{2,\lambda;\Omega}^{2}:=\!\!\sup_{x\in\Omega,0<\rho<{\rm diam}\Omega}\rho^{-\lambda}\int_{\Omega(x,\rho)}\Bigl|u(y)-\frac{1}{|\Omega(x,\rho)|}\int_{\Omega(x,\rho)}u(z)\, dz\Bigr|^{2}\, dy<\infty,
\]
 where $\Omega(x,\rho)=\Omega\cap\{y\in\mathbb{R}^{3}:\left|y-x\right|<\rho\}$.
The space $L^{2,\lambda}(\Omega;\C)$ is naturally equipped with the
norm 
\[
\left\Vert u\right\Vert _{L^{2,\lambda}(\Omega;\C)}=\left\Vert u\right\Vert _{L^{2}(\Omega;\C)}+[u]_{2,\lambda;\Omega}.
\]

We shall use the following standard properties.
\begin{lem}[{\cite[Chapter 1]{TROIANIELLO-1987}}]
\label{lem:campanato_properties}Take $\lambda\ge0$ and $p\in[2,\infty)$. 
\begin{enumerate}
\item If $\lambda\in(3,5)$ then $L^{2,\lambda}\left(\Omega;\C\right)\cong C^{0,\frac{\lambda-3}{2}}\left(\overline{\Omega};\C\right)$. 
\item If $\lambda<3$, $u\in L^{2}(\Omega;\C)$ and $\nabla u\in L^{2,\lambda}\left(\Omega;\C^{3}\right)$
then $u\in L^{2,2+\lambda}\left(\Omega;\C\right)$, and the embedding
is continuous. 
\item The embedding $L^{p}\left(\Omega;\C\right)\hookrightarrow L^{2,3\frac{p-2}{p}}\left(\Omega;\C\right)$
is continuous. 
\end{enumerate}
\end{lem}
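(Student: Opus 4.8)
The plan is to treat these as the classical Campanato space results, for which the only nontrivial input is the regularity of $\Omega$. Since $\bo$ is of class $C^{1,1}$ (Lipschitz would suffice), $\Omega$ is of type $A$: there is a constant $A>0$, depending only on $\Omega$, such that
\[ A\rho^{3}\le|\Omega(x,\rho)|\le\tfrac{4}{3}\pi\rho^{3},\qquad x\in\overline{\Omega},\ 0<\rho<\operatorname{diam}\Omega. \]
The lower (measure-density) bound is where the boundary regularity enters; the upper bound is trivial. I also record the uniform Poincar\'e inequality on the caps: there is $C_{P}>0$, depending only on $\Omega$, with $\int_{\Omega(x,\rho)}|v-v_{x,\rho}|^{2}\le C_{P}\rho^{2}\int_{\Omega(x,\rho)}|\nabla v|^{2}$, uniformly in $x$ and $\rho$, where throughout $w_{x,\rho}:=|\Omega(x,\rho)|^{-1}\int_{\Omega(x,\rho)}w$. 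With these two facts the three statements follow by standard arguments, which I sketch in increasing order of difficulty; I abbreviate the Campanato seminorms by $[u]$ and $[\nabla u]$.

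Part (iii) is immediate. Since the mean minimises the $L^{2}$ deviation over constants, $\int_{\Omega(x,\rho)}|u-u_{x,\rho}|^{2}\le\int_{\Omega(x,\rho)}|u|^{2}$; H\"older's inequality with exponents $p/2$ and $p/(p-2)$ together with the upper measure bound gives $\int_{\Omega(x,\rho)}|u|^{2}\le\|u\|_{L^{p}(\Omega)}^{2}\,|\Omega(x,\rho)|^{(p-2)/p}\le C\|u\|_{L^{p}}^{2}\rho^{3(p-2)/p}$. Hence $[u]\le C\|u\|_{L^{p}}$ for $\lambda=3(p-2)/p$, which is the asserted continuous embedding.

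For Part (i), one inclusion is easy: if $u\in C^{0,\gamma}$ with $\gamma=(\lambda-3)/2$, then on each cap $|u-u_{x,\rho}|\le[u]_{C^{0,\gamma}}\rho^{\gamma}$, so the oscillation integral is $\le C[u]_{C^{0,\gamma}}^{2}\rho^{2\gamma+3}=C[u]_{C^{0,\gamma}}^{2}\rho^{\lambda}$. The converse is the heart of Campanato's theorem. Given $u\in L^{2,\lambda}$, comparing means at consecutive dyadic scales and enlarging the domain of integration yields, via the lower measure bound, $|u_{x,\rho}-u_{x,\rho/2}|^{2}\le C\,|\Omega(x,\rho/2)|^{-1}\int_{\Omega(x,\rho)}|u-u_{x,\rho}|^{2}\le C[u]^{2}\rho^{\lambda-3}$. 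Since $\lambda>3$ the exponent is positive, so $\sum_{k}|u_{x,\rho/2^{k}}-u_{x,\rho/2^{k+1}}|$ converges geometrically; its limit defines a representative $\tilde u(x)$ with $|u_{x,\rho}-\tilde u(x)|\le C[u]\rho^{(\lambda-3)/2}$. Finally, for $x,y\in\overline{\Omega}$ with $R=|x-y|$ small, the caps $\Omega(x,2R)$ and $\Omega(y,2R)$ overlap in a set of measure $\gtrsim R^{3}$; estimating $|u_{x,2R}-u_{y,2R}|$ on this overlap by the seminorm and adding the two rate estimates gives $|\tilde u(x)-\tilde u(y)|\le C[u]R^{(\lambda-3)/2}$, i.e. the H\"older bound with exponent $(\lambda-3)/2$.

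Part (ii) uses the same telescoping, now in the regime $\lambda<3$. Writing $m(\rho)=(\nabla u)_{x,\rho}$, the same comparison gives $|m(\rho/2)-m(\rho)|\le C[\nabla u]\rho^{(\lambda-3)/2}$, but now $(\lambda-3)/2<0$, so summing from the top scale $\operatorname{diam}\Omega$ down to $\rho$ the geometric series is dominated by its last term, and with $|m(\operatorname{diam}\Omega)|\le C\|\nabla u\|_{L^{2}}$ one obtains $|m(\rho)|\le C(\|\nabla u\|_{L^{2}}+[\nabla u]\rho^{(\lambda-3)/2})$. Consequently $|m(\rho)|^{2}|\Omega(x,\rho)|\le C\|\nabla u\|_{L^{2,\lambda}}^{2}\rho^{\lambda}$ (absorbing $\rho^{3}\le C\rho^{\lambda}$), and combining with the oscillation bound gives the Morrey estimate $\int_{\Omega(x,\rho)}|\nabla u|^{2}\le C\|\nabla u\|_{L^{2,\lambda}}^{2}\rho^{\lambda}$. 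Feeding this into the uniform Poincar\'e inequality yields $\int_{\Omega(x,\rho)}|u-u_{x,\rho}|^{2}\le C_{P}\rho^{2}\int_{\Omega(x,\rho)}|\nabla u|^{2}\le C\rho^{\lambda+2}$, which is exactly $u\in L^{2,2+\lambda}$ with continuous dependence. The main obstacle throughout is not any single estimate but securing the uniformity in $x$ and $\rho$ of the measure-density lower bound and of the Poincar\'e constant on the caps $\Omega(x,\rho)$; both rest on the $C^{1,1}$ (or Lipschitz) regularity of $\bo$, after which the dyadic telescoping is routine.
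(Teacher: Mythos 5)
Your proposal is correct: the paper does not prove this lemma at all, but quotes it from \cite[Chapter 1]{TROIANIELLO-1987}, and your sketch is a faithful reconstruction of exactly those classical arguments --- the type~A measure-density bound, the dyadic telescoping of means for Campanato's integral characterisation of H\"older continuity, and the uniform Poincar\'e inequality on the caps $\Omega(x,\rho)$ for part~(2). You also correctly identify the only place where the regularity of $\bo$ genuinely enters (uniformity in $x$ and $\rho$ of the density and Poincar\'e constants, which holds for the $C^{1,1}$, indeed Lipschitz, boundary assumed in the paper), so nothing further is needed.
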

We now state the regularity result regarding Campanato estimates we
will use.
\begin{lem}[{\cite[Theorem 2.19]{TROIANIELLO-1987}}]
\label{lem:campanato_regularity} Assume that \eqref{eq:Hyp-Ellip}
holds and that $\Im\mu\equiv0$. There exists $\tilde{\lambda}\in(1,2)$
depending only on $\Omega$ and $\Lambda$ such that if $F\in L^{2,\lambda}\left(\Omega;\mathbb{C}^{3}\right)$
for some $\lambda\in[0,\tilde{\lambda}]$, and $u\in H^{1}\left(\Omega;\mathbb{C}\right)$
satisfies 
\[
\left\{ \begin{array}{ll}
\div(\mu\nabla u)=\div F\quad&\mbox{in }\Omega,\\
\mu\nabla u\cdot\nu=F\cdot\nu\quad&\mbox{on }\partial\Omega,
\end{array}\right.
\]
 then $\nabla u\in L^{2,\lambda}\left(\Omega;\mathbb{C}^{3}\right)$
and 
\begin{equation}
\left\Vert \nabla u\right\Vert _{L^{2,\lambda}\left(\Omega;\mathbb{C}^{3}\right)}\le C\left\Vert F\right\Vert _{L^{2,\lambda}\left(\Omega;\mathbb{C}^{3}\right)}\label{eq:estimate campanato}
\end{equation}
 for some constant $C$ depending only on $\Omega$ and $\Lambda$.
\end{lem}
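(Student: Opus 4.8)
The plan is to run a Campanato (Morrey) iteration whose engine is the De Giorgi--Nash--Moser theory. The hypothesis $\Im\mu\equiv0$ enters exactly here: De Giorgi--Nash applies to the \emph{real} scalar divergence-form equation, and it is what yields a decay exponent for $|\nabla u|^{2}$ strictly above the trivial one, hence a threshold $\tilde\lambda>1$. Writing $u=u_{1}+\ii u_{2}$, the problem splits into two real Neumann problems with the same real coefficient $\mu$, so it suffices to treat real $u$. I would first record the global energy bound: testing the weak formulation with $u$ and using the conormal condition $\mu\nabla u\cdot\nu=F\cdot\nu$ to absorb the boundary term gives, by the ellipticity in \eqref{eq:Hyp-Ellip}, $\Lambda^{-1}\|\nabla u\|_{L^{2}}^{2}\le\int_{\Omega}\nabla u\cdot\mu\nabla u=\int_{\Omega}\nabla u\cdot F$, whence
\[
\left\Vert \nabla u\right\Vert _{L^{2}(\Omega;\C^{3})}\le C\left\Vert F\right\Vert _{L^{2}(\Omega;\C^{3})}\le C\left\Vert F\right\Vert _{L^{2,\lambda}(\Omega;\C^{3})}.
\]
It then remains only to bound the seminorm $[\nabla u]_{2,\lambda;\Omega}$, i.e. to prove the local decay of $\phi(x_{0},\rho):=\int_{\Omega(x_{0},\rho)}|\nabla u|^{2}$.

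Fix $x_{0}\in\overline{\Omega}$ and $R$ small, and compare $u$ on the cap $\Omega(x_{0},R)$ with the $\mu$-harmonic replacement $w$: the solution of $\div(\mu\nabla w)=0$ with homogeneous conormal condition on $\bo\cap B(x_{0},R)$ (when the cap meets the boundary) and $w=u$ on the interior spherical part $\Omega\cap\partial B(x_{0},R)$. Then $v:=u-w$ solves $\div(\mu\nabla v)=\div F$ with homogeneous boundary data, so $\int_{\Omega(x_{0},R)}|\nabla v|^{2}\le C\int_{\Omega(x_{0},R)}|F|^{2}$. For $w$ the key input is the De Giorgi--Nash--Moser estimate, giving an exponent $\gamma\in(0,\tfrac12)$, depending only on $\Lambda$, with $\mathrm{osc}_{\Omega(x_{0},\rho)}w\le C(\rho/R)^{\gamma}\,\mathrm{osc}_{\Omega(x_{0},R)}w$ for $\rho\le R$ (for caps touching $\bo$ this is the boundary De Giorgi estimate for the homogeneous conormal problem, obtained after flattening the $C^{1,1}$ boundary and even reflection). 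Feeding the oscillation decay into Caccioppoli's inequality, and bounding the oscillation by the energy through Poincaré and local boundedness, produces
\[
\int_{\Omega(x_{0},\rho)}|\nabla w|^{2}\le C\Bigl(\frac{\rho}{R}\Bigr)^{1+2\gamma}\int_{\Omega(x_{0},R)}|\nabla w|^{2},\qquad\rho\le R,
\]
the exponent $1+2\gamma$ ($=n-2+2\gamma$ with $n=3$) being precisely the gain over the trivial exponent $1$. I would then fix any $\tilde\lambda\in(1,1+2\gamma)\subseteq(1,2)$.

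Writing $\nabla u=\nabla w+\nabla v$, adding the two bounds, and using that $\int_{\Omega(x_{0},R)}|F|^{2}\le C\|F\|_{L^{2,\lambda}}^{2}R^{\lambda}$ (valid for $\lambda<3$, where the Campanato and Morrey spaces coincide) yields the iteration inequality
\[
\phi(x_{0},\rho)\le C\Bigl(\frac{\rho}{R}\Bigr)^{1+2\gamma}\phi(x_{0},R)+C\left\Vert F\right\Vert _{L^{2,\lambda}}^{2}R^{\lambda},\qquad\rho\le R.
\]
Since $\lambda\le\tilde\lambda<1+2\gamma$, the standard iteration lemma (see \cite{GIAQUINTA-MARTINAZZI-2005,TROIANIELLO-1987}) applies with strict inequality of the two exponents and gives $\phi(x_{0},\rho)\le C(\|\nabla u\|_{L^{2}}^{2}+\|F\|_{L^{2,\lambda}}^{2})\rho^{\lambda}$, uniformly in $x_{0}$. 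Together with the energy bound above, this is exactly $\nabla u\in L^{2,\lambda}(\Omega;\C^{3})$ with the estimate \eqref{eq:estimate campanato}.

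I expect the main obstacle to be the boundary analysis rather than the interior one: one must verify that both the De Giorgi--Nash oscillation estimate and the Caccioppoli inequality survive up to $\bo$ for the \emph{conormal} (Neumann-type) condition, which is where the $C^{1,1}$ regularity of $\bo$ is used, both to flatten the boundary and to control the lower-order terms produced by the change of variables and the reflection. A secondary, purely technical point is the strictness $\lambda<1+2\gamma$: the endpoint $\lambda=1+2\gamma$ would force a logarithmic loss in the iteration lemma, which is exactly why $\tilde\lambda$ is taken strictly below $1+2\gamma$, so that the admissible range can be the closed interval $[0,\tilde\lambda]$.
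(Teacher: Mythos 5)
The paper offers no proof of this lemma: it is quoted verbatim from the literature (Troianiello, Theorem 2.19), so there is no internal argument to compare against. Your reconstruction is correct and follows the same route as the cited source: split into real and imaginary parts (this is exactly where $\Im\mu\equiv0$ is used, since De Giorgi--Nash requires real coefficients), compare with the $\mu$-harmonic replacement, convert the De Giorgi--Nash oscillation decay into the Dirichlet-integral decay with exponent $n-2+2\gamma=1+2\gamma$ via Caccioppoli, local boundedness and Poincar\'e, and run the standard iteration lemma for $\lambda\le\tilde\lambda<1+2\gamma$; you also correctly identify that the De Giorgi--Nash gain is precisely what pushes the threshold $\tilde\lambda$ strictly above $n-2=1$, which is what the paper's application (Theorem~\ref{thm:E-holder}) actually needs, and that the boundary (conormal) case is handled by flattening and even reflection.
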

We shall need the following generalisation of Lemma~\ref{lem:friedrichs}
to the case of Campanato estimates. For a proof, see the second part
of the proof of \cite[Theorem 3.4]{YIN-2004}.
\begin{lem}
\label{lem:friedrichs-campanato}Take $\lambda\in[0,2)$ and $F\in L^{2}(\Omega;\C^{3})$
such that $\curl F\in L^{2,\lambda}(\Omega;\C^{3})$, $\div F\in L^{2,\lambda}(\Omega;\C)$
and $F\times\nu=0$ on $\bo$. Then $\nabla F\in L^{2,\lambda}(\Omega;\C^{3})$
and 
\[
\left\Vert \nabla F\right\Vert _{L^{2,\lambda}(\Omega;\C^{3})}\le C\bigl(\left\Vert \curl F\right\Vert _{L^{2,\lambda}(\Omega;\C^{3})}+\left\Vert \div F\right\Vert _{L^{2,\lambda}(\Omega;\C)}\bigr),
\]
for some $C>0$ depending only on $\Omega$ and $\lambda$.
\end{lem}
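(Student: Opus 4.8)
The plan is to prove the Morrey-type decay estimate $\int_{\Omega(x_{0},\rho)}|\nabla F|^{2}\le C\rho^{\lambda}M^{2}$, uniformly in $x_{0}\in\overline{\Omega}$ and in $\rho$, where $M:=\|\curl F\|_{L^{2,\lambda}(\Omega;\C^{3})}+\|\div F\|_{L^{2,\lambda}(\Omega;\C)}$. Since $\lambda<2<3$, the Morrey and Campanato seminorms are equivalent, so this decay is exactly what is needed. The base $L^{2}$ bound is immediate: as $\curl F\in L^{2,\lambda}\subset L^{2}$, $\div F\in L^{2}$ and $F\times\nu=0$, Lemma~\ref{lem:friedrichs} with $p=2$ gives $F\in W^{1,2}(\Omega;\C^{3})$ with $\|\nabla F\|_{L^{2}}\le CM$, so $\nabla F$ is an honest $L^{2}$ field whose local oscillation we may now control; here and below I write $\Phi(\rho):=\int|\nabla F-(\nabla F)_{\rho}|^{2}$ for the oscillation on the relevant ball.

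For interior balls $B(x_{0},\rho)\subset\subset\Omega$ I would use the identity $\Delta F=\nabla\div F-\curl\curl F$ to split $F=v+w$, where $v\in H_{0}^{1}(B(x_{0},\rho);\C^{3})$ solves $\Delta v=\nabla\div F-\curl\curl F$ and $w:=F-v$ is componentwise harmonic. Testing the equation for $v$ against itself and integrating by parts gives $\int|\nabla v|^{2}\le C\int(|\curl F|^{2}+|\div F|^{2})\le C\rho^{\lambda}M^{2}$, the last step using the Morrey--Campanato equivalence for the data. Since $\nabla w$ is itself harmonic it obeys the standard decay $\int_{B_{r}}|\nabla w-(\nabla w)_{r}|^{2}\le C(r/\rho)^{5}\int_{B_{\rho}}|\nabla w-(\nabla w)_{\rho}|^{2}$. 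Combining these with the triangle inequality yields $\Phi(r)\le C(r/\rho)^{5}\Phi(\rho)+C\rho^{\lambda}M^{2}$, and because $\lambda<5$ the standard iteration lemma produces $\Phi(r)\le Cr^{\lambda}M^{2}$, hence the interior Morrey bound.

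The boundary case is the crux. Near a point $x_{0}\in\bo$ I would flatten $\bo$ by a $C^{1,1}$ chart and transport $F$ by the appropriate (co/contravariant) pullback, turning the problem into a div--curl system on a half-ball whose metric coefficients are merely Lipschitz. Freezing those coefficients at $x_{0}$ and then reflecting the field across the flat boundary --- odd in the tangential components, which vanish there thanks to $F\times\nu=0$, and even in the normal component --- extends it, together with its frozen divergence and curl, to a full ball without creating any singular surface contribution. The interior decomposition above then applies verbatim to the frozen, reflected problem.

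The hard part will be controlling the two error terms generated by this reduction. Freezing the Lipschitz coefficients contributes a source of size $O(\rho)\,|\nabla F|$, which only inserts a factor $C\rho^{2}$ in front of $\Phi(\rho)$ and is a harmless vanishing perturbation in the iteration. The genuinely restrictive term is the lower-order one, of size $O(1)\,|F|$, arising from differentiating the non-constant transformation coefficients: via the Sobolev embedding $W^{1,2}\hookrightarrow L^{6}$ one only gets $\int_{B_{\rho}}|F|^{2}\le C\rho^{2}\|F\|_{L^{6}}^{2}$, so this term forces the inhomogeneity to scale like $\rho^{2}$. This is precisely why $\lambda<2$ must be assumed: under it the perturbed estimate reads $\Phi(r)\le C[(r/\rho)^{5}+\rho^{2}]\Phi(\rho)+C\rho^{\min(\lambda,2)}M^{2}$, and the iteration lemma closes to give $\Phi(r)\le Cr^{\lambda}M^{2}$. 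Assembling the interior and boundary bounds with the base $L^{2}$ estimate, and invoking the Morrey--Campanato equivalence for $\lambda<3$, yields $\nabla F\in L^{2,\lambda}(\Omega;\C^{3})$ with the stated norm control.
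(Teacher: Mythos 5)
Your proposal is correct and takes essentially the same route as the proof the paper relies on: the paper does not prove this lemma itself but defers to the second part of the proof of \cite[Theorem 3.4]{YIN-2004}, which is exactly this Morrey--Campanato iteration --- interior comparison with harmonic functions via $\Delta F=\nabla\div F-\curl\curl F$, boundary flattening in a $C^{1,1}$ chart followed by odd/even reflection (legitimate because $F\times\nu=0$), and absorption of the coefficient-freezing and lower-order errors, the latter being precisely where the restriction $\lambda<2$ enters. No substantive difference to report.
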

We are now in a position to prove Theorem~\ref{thm:E-holder}.

\vspace{5pt}\noindent\emph{Proof of Theorem~\ref{thm:E-holder}$\,$}
With an abuse of notation, several positive constants depending only
on $\Omega$, $\Lambda$ and $\omega$ will be denoted by the same
letter $C$.

Write $E-G$ and $H$ in terms of scalar and vector potentials $(q_{E},\Phi_{E})$
and $(q_{H},\Phi_{H})$, as in \eqref{eq:E,H helmholtz}. By Lemma~\ref{lem:Phi-W2p}
and the Sobolev embedding theorem $\curl\Phi_{H}\in L^{6}(\Omega;\C^{3})$
and $\left\Vert \curl\Phi_{H}\right\Vert _{L^{6}(\Omega;\C^{3})}\le C\left\Vert (E,J_{e})\right\Vert _{L^{2}(\Omega;\C^{3})^{2}}$.
Thus, by Lemma~\ref{lem:campanato_properties}, part (3), we have
that $\curl\Phi_{H}\in L^{2,2}(\Omega;\C^{3})$ and
\[
\left\Vert \curl\Phi_{H}\right\Vert _{L^{2,2}(\Omega;\C^{3})}\le C\left\Vert (E,J_{e})\right\Vert _{L^{2}(\Omega;\C^{3})^{2}}.
\]
Therefore, applying Lemma~\ref{lem:campanato_regularity} to \eqref{eq:PDE-qH}
we obtain that $\nabla q_{H}\in L^{2,\min(\tilde{\lambda},\lambda)}(\Omega;\C^{3})$
and
\[
\left\Vert \nabla q_{H}\right\Vert _{L^{2,\min(\tilde{\lambda},\lambda)}(\Omega;\C^{3})}\le C(\left\Vert (E,J_{e})\right\Vert _{L^{2}(\Omega;\C^{3})^{2}}+\left\Vert (\curl G,J_{m})\right\Vert _{L^{2,\lambda}(\Omega;\C^{3})^{2}}).
\]
Combining the last two inequalities we obtain the estimate
\[
\left\Vert H\right\Vert _{L^{2,\min(\tilde{\lambda},\lambda)}(\Omega;\C^{3})}\le C(\left\Vert (E,J_{e})\right\Vert _{L^{2}(\Omega;\C^{3})^{2}}+\left\Vert (\curl G,J_{m})\right\Vert _{L^{2,\lambda}(\Omega;\C^{3})^{2}}).
\]
As a consequence, applying Lemma~\ref{lem:friedrichs-campanato}
to $\Psi_{E}=\curl\Phi_{E}$, by \eqref{eq:PsiE} and the fact that
$L^{\infty}$ is a multiplier space for $L^{2,\min(\tilde{\lambda},\lambda)}$,
we obtain that $\nabla\curl\Phi_{E}\in L^{2,\min(\tilde{\lambda},\lambda)}(\Omega;\C^{3})$
and
\[
\left\Vert \nabla\curl\Phi_{E}\right\Vert _{L^{2,\min(\tilde{\lambda},\lambda)}(\Omega;\C^{3})}\le C(\left\Vert (E,J_{e})\right\Vert _{L^{2}(\Omega;\C^{3})^{2}}+\left\Vert (\curl G,J_{m})\right\Vert _{L^{2,\lambda}(\Omega;\C^{3})^{2}}).
\]
Hence, by Lemma~\ref{lem:campanato_properties}, part (2), and \eqref{eq:PhiE-PhiH-W1p}
we have that $\curl\Phi_{E}\in L^{2,2+\min(\tilde{\lambda},\lambda)}(\Omega;\C^{3})$
and
\begin{multline*}
\left\Vert \curl\Phi_{E}\right\Vert _{L^{2,2+\min(\tilde{\lambda},\lambda)}(\Omega;\C^{3})}\\
\le C(\left\Vert (E,G,J_{e})\right\Vert _{L^{2}(\Omega;\C^{3})^{3}}+\left\Vert (\curl G,J_{m})\right\Vert _{L^{2,\lambda}(\Omega;\C^{3})^{2}}).
\end{multline*}
Then, by Lemma~\ref{lem:campanato_properties}, part (1), we obtain
that $\curl\Phi_{E}\in C^{0,\frac{\min(\tilde{\lambda},\lambda)-1}{2}}(\overline{\Omega};\C^{3})$
and
\begin{multline*}
\left\Vert \curl\Phi_{E}\right\Vert _{C^{0,\frac{\min(\tilde{\lambda},\lambda)-1}{2}}(\overline{\Omega};\C^{3})}\\
\le C(\left\Vert (E,G,J_{e})\right\Vert _{L^{2}(\Omega;\C^{3})^{3}}+\left\Vert (\curl G,J_{m})\right\Vert _{L^{2,\lambda}(\Omega;\C^{3})^{2}}).
\end{multline*}
By \eqref{eq:epsi-holder}, classical Schauder estimates applied to
\eqref{eq:PDE-qE} yield $\nabla q_{E}\in C^{0,\beta}(\overline{\Omega};\C^{3})$,
where $\beta=\min(\frac{\tilde{\lambda}-1}{2},\frac{\lambda-1}{2},\alpha)$,
and
\begin{multline*}
\left\Vert \nabla q_{E}\right\Vert _{C^{0,\beta}(\overline{\Omega};\C^{3})}\\
\le C(\left\Vert E\right\Vert _{L^{2}(\Omega;\C^{3})}+\left\Vert (\curl G,J_{m})\right\Vert _{L^{2,\lambda}(\Omega;\C^{3})^{2}}+\left\Vert (G,J_{e})\right\Vert _{\Calpha^{2}}).
\end{multline*}
Finally, combining the last two estimates yields the result.
\qed

\bibliographystyle{plain}
\phantomsection\addcontentsline{toc}{section}{\refname}\bibliography{biblio}

\end{document}